\newtheorem{theorem}{Theorem}[section]
\newtheorem{lemma}[theorem]{Lemma}
\newtheorem{lemdef}[theorem]{Definition-Lemma}
\newtheorem{proposition}[theorem]{Proposition}
\newtheorem{cor}[theorem]{Corollary}
\theoremstyle{definition}
\newtheorem{definition}[theorem]{Definition}
\theoremstyle{remark}
\newtheorem{remark}[theorem]{Remark}
\numberwithin{equation}{section}
 \DeclareMathSymbol{\vartriangle}{\mathord}{AMSa}{"4D}
\DeclareMathSymbol{\triangledown}{\mathord}{AMSa}{"4F}
\begin{document}

\title{Laplacian matrices  and spanning trees of tree graphs}

\author{Philippe Biane}
\address{CNRS,  Institut Gaspard Monge
UMR  8049\\
Universit\'e Paris-Est
5 boulevard Descartes\\
77454 Champs-Sur-Marne\\
FRANCE }
\email{biane@univ-mlv.fr}
\author{Guillaume Chapuy}
\address{CNRS, IRIF UMR  8243\\
Universit\'e Paris-Diderot – Paris 7
Case 7014\\
75205 PARIS Cedex 13\\FRANCE }
\email{guillaume.chapuy@liafa.univ-paris-diderot.fr}
\thanks{Both authors acknowledge support from {\it Ville de Paris}, grant ``\'Emergences 2013, Combinatoire \`a Paris''. G.C. acnowledges support from {\it Agence Nationale de la Recherche}, grant ANR 12-JS02-001-01 ``Cartaplus''.}

\begin{abstract}
If $G$ is a strongly connected finite directed graph, the set $\mathcal{T}G$ of rooted directed spanning trees of $G$ is naturally equipped with a structure of directed graph: there is a directed edge from any spanning tree to any other obtained by adding an outgoing edge at its root vertex and deleting the outgoing edge of the endpoint.
Any Schr\"odinger operator on $G$, for example the Laplacian,  can be lifted canonically to $\mathcal{T}G$. We show that the determinant of such a lifted  Schr\"odinger operator admits a remarkable factorization into a product of    determinants of the restrictions of  Schr\"odinger operators on subgraphs of $G$
and we give a combinatorial description of the multiplicities using an exploration procedure of the graph.
A similar factorization can be obtained from earlier ideas of C. Athaniasadis,
 but this leads to a different expression of the multiplicities, as signed sums on which the nonnegativity is not appearent. 
We also provide a description of the block structure associated with this factorization.

 As a simple illustration we reprove a formula of Bernardi enumerating spanning forests of the hypercube, that is closely related to the graph of spanning trees of a bouquet. 
Several combinatorial questions are left open, such as giving a bijective interpretation of the results. 
\end{abstract}

\maketitle

\section{Introduction}
\label{sec:1}

Kirchoff's matrix-tree theorem relates the number of spanning trees of a graph to the minors of its Laplacian matrix. It has a number of applications in enumerative combinatorics, 
including Cayley's formula:
\begin{align}\label{eq:cayley}
|\mathcal{T}K_n| = n^{n-1},
\end{align}
counting rooted spanning trees of the complete graph $K_n$ with $n$ vertices 
and Stanley's formula:
\begin{align}\label{eq:hypercube}
|\mathcal{T}\{0,1\}^n| =\prod_{i=1}^n (2i)^{{n\choose i}},
\end{align}
for rooted spanning trees of the hypercube $\{0,1\}^n$, see \cite{Stanley}.
In probability theory, a variant of Kirchoff's theorem, known as the Markov chain tree theorem, expresses the invariant measure of a finite irreducible Markov chain in terms of spanning trees of its underlying graph (see \cite[Chap 4]{LP}, or \eqref{eq:mctt} below). 
An instructive proof of this result relies on lifting the Markov chain to a chain on the set of spanning trees of its underlying graph. In particular, this construction endows the set $\mathcal{T}G$ of spanning trees of any weighted directed graph $G$ with a structure of weighted directed graph. This construction is recalled in Section~\ref{sec:defs},
(the reader can already have a look at the example of Figure~\ref{fig:exampleGraph}).
In the recent paper \cite{B}, the first author 
conjectured that the number of spanning trees of 
 $\mathcal{T}G$ is given by a product of minors of the Laplacian matrix of the original graph $G$. 
In this paper, we prove this conjecture. 
More generally, given a Schr\"odinger operator on $G$, we will show (Theorem~\ref{thm:main}) that the determinant of a lifted Schr\"odinger operator on $\mathcal{T}G$ factorizes as a product of determinants of submatrices of the Schr\"odinger operator on $G$. In this factorization, only submatrices indexed by strongly connected subsets of vertices $W\subset V(G)$ appear, and the multiplicity $m(W)$ with which a given subset appears is described combinatorially via an algorithm of exploration of the graph $G$.

The case of the adjacency matrix (another special case of Schr\"odinger operator) was already studied by C. Athanasiadis who related the eigenvalues in the graph and in the tree graph (see~\cite{An}, or Section~\ref{sec:An}). As we shall see, this leads to a similar factorization of the characteristic polynomial as the one we obtain, and in fact the proof of~\cite{An} can easily be extended to any Schr\"odinger operator.
However the methods of~\cite{An}, whose proofs are based on a direct and elegant path-counting approach via inclusion-exclusion, lead to an expression of the multiplicities  as signed sums which are not apparently positive.
Our proof is of a different kind and
proceeds by constructing sufficiently many invariant subspaces of the Laplacian matrix of $\mathcal{T}G$. 
It is both algebraic and combinatorial in nature, but it leads to a positive description of the multiplicities. As a result our main theorem, or at least its main corollary, can be given a purely combinatorial formulation, which suggests the existence of a purely combinatorial proof. This is left as an open problem. Another combinatorial problem that we leave open 
concerns the definition of the multiplicities $m(W)$: in the way we define them, these numbers depend both on a total ordering of the vertex set of the graph, and on the choice of a ``base point'' in each subset $W$, but it follows from the algebraic part of the proof that they actually do not depend on these choices. This property is mysterious to us
and a direct combinatorial understanding of it would probably shed some light on the previous question.

Finally, we note that there exists a factorization for the Laplacian matrix of the line graph associated to a directed graph (see \cite{L}) that  looks similar to what we obtain here for the tree graph. The case of the tree graph is actually more involved.

\smallskip

The paper is structured as follows. In Section~\ref{sec:defs}, we state basic definitions and recall the construction of the tree graph.  We also present the results of Athanasiadis~\cite{An} and rephrase them from the viewpoint of the characteristic polynomial. Then in Section~\ref{sec:formula} we introduce the algorithm that defines the multiplicities $m(W)$, which enables us to state our main result for the Schr\"odinger operators (Theorem~\ref{thm:main}). We also state a corollary (Theorem~\ref{thm:mainSpanning}) that deals with spanning trees of the tree graph $\mathcal{T}G$, thus answering directly the question of~\cite{B}. In Section~\ref{proof}, we give the proof of the main result, that works, first, by constructing some invariant subspaces of the Schr\"odinger operator of $\mathcal{T}G$, then by checking that we have constructed sufficiently enough of them using a degree argument. Finally in Section~\ref{sec:examples} we illustrate our results by treating a few examples explicitly.

\smallskip

{\bf Acknowledgements.} When the first version of this paper was made public, we were not aware of the reference~\cite{An}. We thank Christos Athanasiadis for drawing our attention to it.
 G.C. also thanks Olivier Bernardi for an interesting discussion related to the reference~\cite{Bernardi}.

\begin{figure}[h]
\includegraphics[width=\linewidth]{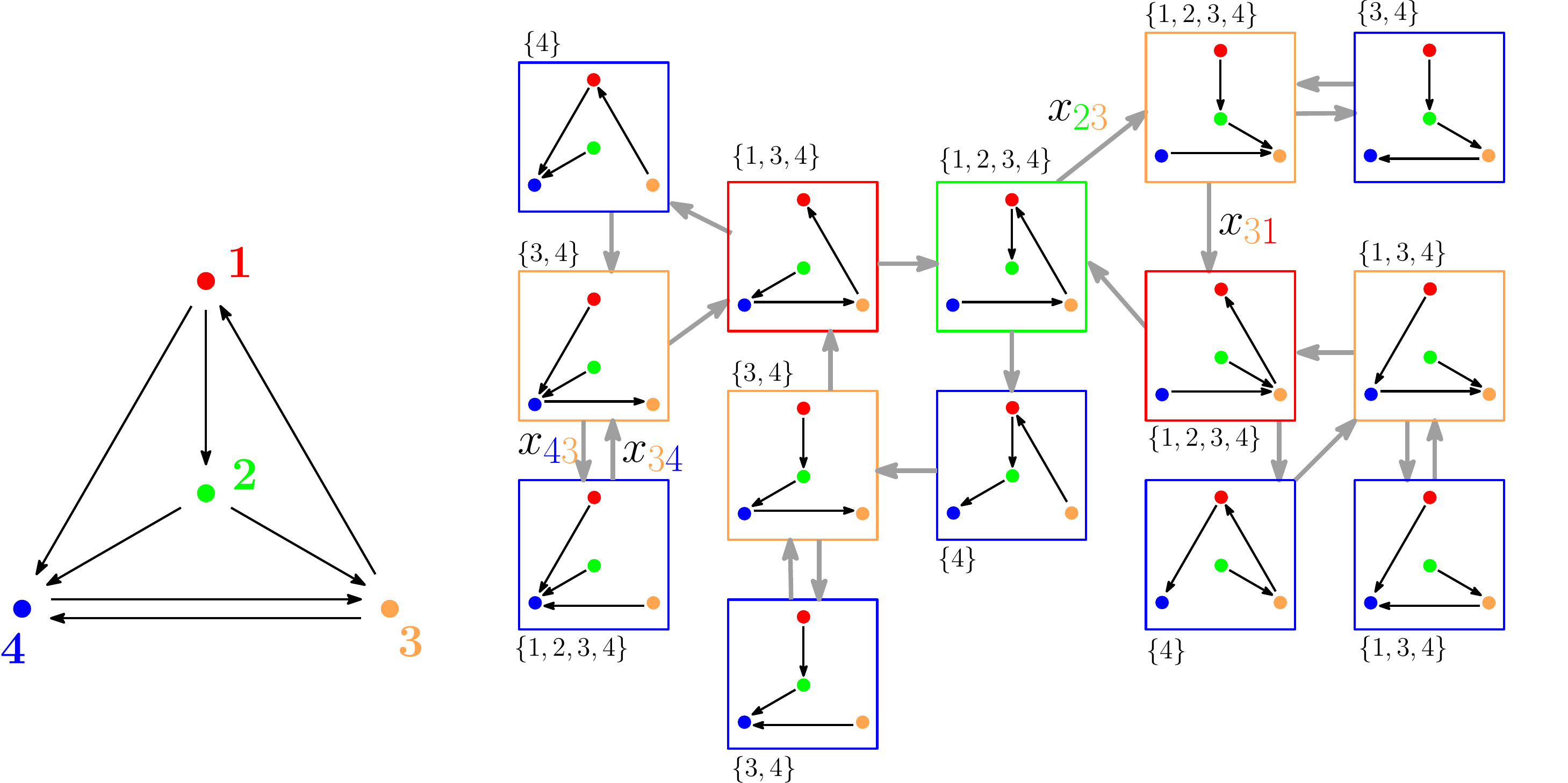}
\caption{A directed graph $G$ with $4$ vertices (Left), and the graph $\mathcal{T}G$ (Right). Each vertex of $\mathcal{T}G$ is one of the $14$ spanning trees of $G$. The weight of an edge in $\mathcal{T}G$ only depends on the root vertex of the two trees it links -- only certain edge weights are indicated on this picture. The subset of vertices indicated next to each spanning tree is the $\psi$-value returned by the algorithm of Section~\ref{sec:algo}.}\label{fig:exampleGraph}
\end{figure}

\section{Directed graphs and tree graphs}
\label{sec:defs}
In this section we set notations and recall a few basic facts.

\subsection{Directed graphs}
In this paper all directed graphs are finite and simple.
Let $G=(E,V)$ be a directed graph, with vertex set $V$ and edge set $E$. 
For each edge we denote $s(e)$ its \emph{source} and $t(e)$ its \emph{target}.
The graph $G$ is {\sl strongly connected} if for any pair of vertices $(v,w)$ there exists an oriented path from $v$ to $w$.

If $W\subset V$ then the graph $G$ induces a graph $G_W=(W,E_W)$ where $E_W$ is the set of edges $e$ with $s(e),t(e)\in W$.
A subset $W\subset V$ will be said to be \emph{strongly connected} if the graph $G_W$  is strongly connected.
A {\sl cycle} in $G$ is a path which starts and ends at the same vertex. The cycle is {\sl simple} if each vertex and each edge  in the cycle is traversed exactly once. 

\subsection{Laplacian matrix and Schr\"odinger operators}
For  a finite directed graph $G$, let $x_{e},e\in E$ be a set of indeterminates. 
The {\sl edge-weighted Laplacian} of the graph is the matrix $(Q_{vw})_{v,w\in V}$ given by
$Q_{vw}=x_e$ if $v\ne w$  $s(e)=v, t(e)=w$
(this quantity is $0$ if there is no such edge)
 and  $Q_{vv}=-\sum_{e:s(e)=v} x_e$.  

Let $y_{v},v\in V$ be another set of  variables and $Y$ be the diagonal matrix with  $Y_{vv}=y_v$. The associated {\sl Schr\"odinger operator with potential} $Y$ is the matrix $L=Q+Y$.
Observe that, if one specializes the variables $y_v$ to a common value $-z$, then $L=Q-zI$ and $\det(L)$ is the characteristic polynomial of $Q$ evaluated on $z$.

We will consider the space of functions  on $V$ with values in the field of 
rational fractions ${\mathbb F}_G={\bf C}(x_e; e\in E, y_v; v\in V)$, and the space of  measures on $V$ (again with with values in ${\mathbb F}_G$). These are vector spaces over the field ${\mathbb F}_G$.
The Schr\"odinger operator $L$ acts on  functions on the right by
$$L\phi(v)=\sum_{w}L_{vw}\phi(w),$$
and on measures on the left by 
$$\mu L(w)=\sum_{v}\mu(v)L_{vw}.$$

The space of measures has a basis given
by the $\delta_v,v\in V$ where $\delta_v$ is the measure putting mass 1 on $v$ and 0 elsewhere.

\subsection{A Markov chain} If the $x_e$ are positive real numbers, the matrix $Q$ is the generator of a continuous time Markov chain on $V$, with semigroup of probability transitions given by $e^{tQ}$. This chain is irreducible if and only if the graph $G$ is strongly connected. The function 1 is in the kernel of the action of $Q$ on functions, and this kernel is one-dimensional if and only if the chain is irreducible. Dually, if the chain is irreducible then there is a positive measure in the kernel of the action of $Q$ on measures (by the Perron-Frobenius theorem), which is unique up to a multiplicative constant. See for example~\cite{Seneta} for more on these classical results.

\subsection{Spanning trees} Let $G$ be a directed graph, 
an {\sl oriented spanning tree } of $G$ (or \emph{spanning tree} of $G$ for short) is a subgraph of $G$, containing all vertices, with no cycle, in which one vertex, called the {\sl root}, has outdegree 0 and the other vertices have outdegree 1.
If $\mathbf{a}$ is such a tree, with edge set $E_\mathbf{a}$, we denote 
\begin{equation}\label{arbres}\pi_\mathbf{a}=\prod_{e\in E_\mathbf{a}}x_e.\end{equation}
More generally, if $W\subset V$ is a nonempty subset, an {\sl oriented forest}  of $G$,
 rooted in $W$, is a subgraph of $G$, containing all vertices, with no 
 cycle and such that vertices in $W$ have  outdegree 0 while the other vertices have outdegree 1.
Again for a forest~$\mathbf{f}$, with edge set $E_\mathbf{f}$, we put 
\begin{equation}\label{forets}\pi_\mathbf{f}=\prod_{e\in E_\mathbf{f}}x_e.\end{equation}

The matrix-tree theorem states that, if $W\subset V$ and $Q^W$ is the matrix obtained from $Q$ by deleting rows and columns indexed by elements of $W$, then 
$$\det(Q^W)=\sum_{\mathbf{f}\in \mathcal F_W}\pi_\mathbf{f} $$
the sum being over oriented forests rooted in $W$.
In particular, in the Markov chain interpretation, an explicit formula for an invariant measure is given by
\begin{align}\label{eq:mctt}
\mu(v)=\sum_{\mathbf{a}\in T_v}\pi_\mathbf{a},
\end{align}
where the sum is over spanning oriented trees rooted at $v$. This statement is known, in the context of probability theory, as the Markov Chain Tree theorem, see~\cite[Chap. 4]{LP}.

It will be convenient in the following to use the notation $Q_W=Q^{V\setminus W}$ and  $L_W=L^{V\setminus W}$to denote the matrix extracted from the Laplacian or Schr\"odinger matrix  of $G$ by keeping only lines and columns indexed by elements of $W$.

\subsection{The tree graph $\mathcal{T}G$}\label{sec:wlap}
Let $G=(E,V)$ be a finite directed graph and $\mathbf{a}$
an oriented spanning tree  of $G$ with root $r$. For  an edge $e\in V$ with $s(e)=r$,  let $\mathbf{b}$ be the subgraph of $G$  obtained by adding edge $e$ to $\mathbf{a}$ then deleting the edge coming out of $t(e)$ in $\mathbf{a}$. See Figure~\ref{fig:treeGraphRules}. It is easy to check that $\mathbf{b}$ is an oriented spanning tree  of $G$, with root $t(e)$.
\begin{figure}[h]
\includegraphics[width=0.6\linewidth]{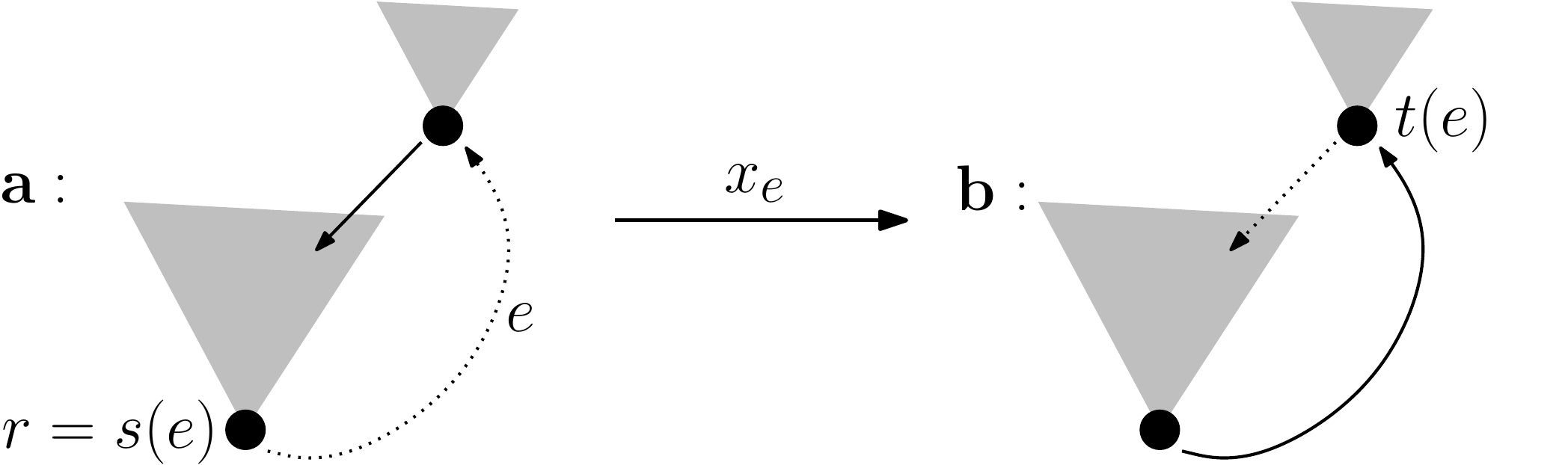}
\caption{An edge $\mathbf{a}\rightarrow \mathbf{b}$ in the tree graph $\mathcal{T}G$. It is associated with the edge weight $x_e$.}\label{fig:treeGraphRules}
\end{figure}

The {\sl  tree graph} of $G$, denoted $\mathcal T G$, is the directed graph whose vertices are the oriented spanning trees of $G$ and whose edges are 
obtained by the  previous construction, i.e. for each pair $\mathbf{a},\mathbf{b}$ as above  we obtain an edge of $\mathcal T G$  with source $\mathbf{a}$ and target $\mathbf{b}$. We will denote 
$\mathcal{T}V$ the set of vertices of $\mathcal{T}G$, in other words, $\mathcal{T}V$ is the set of oriented spanning trees of $G$.
Figure~\ref{fig:exampleGraph} gives a full example of the construction.
One can prove that the graph $\mathcal T G$ is strongly connected if $G$ is, see for example \cite{AT}. Moreover the graph $\mathcal T G$ is simple and has no loop.
There is a natural map $p$ from $\mathcal T G$ to $G$ which maps each  vertex of $\mathcal T G$,
 which is  an oriented spanning tree of $G$, to its root, and  maps each edge  of $\mathcal T G$   to the edge $e$ of $G$ used for its construction.

We assign weights to the edges and vertices of $\mathcal{T}G$ as follows: we give the weight $x_{e}$ to any edge $e'$ of $\mathcal T G$ 
such that $p(e')=e$ and we give the weight $y_v$ to the tree $\bf a$ if its root is $v$.

This leads to a weighted Laplacian and a Schr\"odinger operator for $\mathcal T G$, which we denote respectively by $\mathcal Q$ and $\mathcal L$. More precisely,  
 $\mathcal Q$ is the matrix with rows and columns indexed by the oriented spanning trees of $G$ such that 
\begin{align*}
\mathcal Q_{\mathbf{a}\mathbf{c}}=&0\text{ if $\mathbf{a}\ne \mathbf{c}$ and $\mathbf{a}\mathbf{c}$ is not an edge of $\mathcal T G$ }
\\
  \mathcal Q_{\mathbf{a}\mathbf{b}}=&x_{e}\text{ if $\mathbf{a}\mathbf{b}$ is an edge of $\mathcal T G$ and $e$ is the edge of $\mathbf{b}$ going out the root of $\mathbf{a}$.}
\\
\mathcal Q_{\mathbf{a}\mathbf{a}}=&-\sum_{\mathbf{b}\ne \mathbf{a}}\mathcal Q_{\mathbf{a}\mathbf{b}}.
\end{align*}

Similarly, $\mathcal Y$ is the diagonal matrix indexed by ${\mathcal T}V$ with 
${\mathcal Y}_{\bf aa}=y_{root({\bf a})}$ and 
$$\mathcal L=\mathcal Q+\mathcal Y.$$
See~\cite{AT} or \cite{LP} for more on the matrix $\mathcal Q$ in a context of probability theory. 
In \cite{B} the first author proved that there exists a polynomial $\Phi_G$ in the variables $x_e$ such that, for any oriented spanning tree $\mathbf{a}$ of $G$, one has
\begin{eqnarray}\label{eq:polbiane}
\det(\mathcal Q^{\mathbf{a}})=\pi_\mathbf{a}\Phi_G.
\end{eqnarray}
In the same reference it was conjectured that $\Phi_G$ is a product of symmetric minors of the matrix $Q$ (i.e. a product of polynomials of the form $\det(Q_W)$). In this paper we prove this conjecture and provide an explicit formula for $\Phi_G$ (Theorem~\ref{thm:mainSpanning}). Actually we deduce this from a more general result which computes the determinant of $\mathcal L$ as a product of determinants  of the matrices $L_W$ (Theorem~\ref{thm:main}).
These results will be stated in Section~\ref{sec:formula} and proved in Section~\ref{proof}.
The example of the tree graph of a cycle graph was investigated in \cite{B} and we will explain in Section~\ref{sec:examples} how it follows from our general result. 

\subsection{Structure of the tree graph}

Before we state and prove the main theorem of this paper, we give here some elementary properties of the tree graph, which might be of independent interest. 
These properties will not be used in the rest of the paper.

We start with the following simple observation:
for any directed path $\pi$ in the graph $G$, starting at some vertex $v$, and any oriented spanning tree $\mathbf{a}$ rooted at $v$, there exists a unique path starting at $\mathbf{a}$ in $\mathcal T G$ which projects onto $\pi$. Thus the graph $\mathcal T G$ is a 
covering graph of $G$.

If $\mathbf{a}\to \mathbf{b}$ is an edge of $\mathcal T G$, then the union of the edges of $\mathbf{a}$ and $\mathbf{b}$ is a graph with a simple cycle $C$, containing the roots of $\mathbf{a}$ and $\mathbf{b}$, and a forest, with edges disjoint from the edges of $C$, rooted on the vertices of $C$.
The cycle $C$ is the union of the path from the root of $\mathbf{b}$ to the root of $\mathbf{a}$ in the tree $\mathbf{a}$ with the edge from the root of $\mathbf{a}$ to the root of $\mathbf{b}$ in $\mathbf{b}$. If we lift the cycle $C$ in $G$ to a path $\mathcal T C$ in $\mathcal T G$, starting from $\mathbf{a}$, we  get a cycle in $\mathcal T G$, which projects 
bijectively onto the simple cycle $C$.
The cycle $C$, and thus $\mathcal T C$ is completely determined by the edge $\mathbf{a}\mathbf{b}$ in $\mathcal T G$, moreover for any edge in $\mathcal T C$, the associated cycle is again 
 $\mathcal T C$. Conversely, if $C$ is a simple cycle of $G$, and $\mathbf{f}$ a forest rooted at the vertices of $C$, then the trees obtained from $C\cup \mathbf{f}$ by deleting an edge of $C$ form a simple cycle in $\mathcal T G$ which lies above $C$. We deduce:
\begin{proposition}
The set of edges of $\mathcal T G$ can partitionned into edge-disjoint simple cycles, which project onto simple cycles of $G$.
If $C$ is a simple cycle of $G$, with vertex set $W$, then the number of simple cycles of $\mathcal T G$ lying above $C$ is equal to the number of forests rooted in $W$.
\end{proposition}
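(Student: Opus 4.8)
The plan is to make precise the informal discussion that immediately precedes the statement, turning it into a genuine partition argument together with a counting argument. I would organize the proof around one central object: the map that sends an edge $\mathbf{a}\to\mathbf{b}$ of $\mathcal{T}G$ to the simple cycle $C(\mathbf{a},\mathbf{b})$ of $G$ obtained as the union of the $\mathbf{a}$-path from $\mathrm{root}(\mathbf{b})$ to $\mathrm{root}(\mathbf{a})$ with the edge $e$ of $\mathbf{b}$ leaving $\mathrm{root}(\mathbf{a})$. The first step is to verify carefully that $C(\mathbf{a},\mathbf{b})$ is indeed a \emph{simple} cycle: this uses that $\mathbf{a}$ is a tree, so the path from $\mathrm{root}(\mathbf{b})$ to $\mathrm{root}(\mathbf{a})$ in $\mathbf{a}$ has no repeated vertex, and that adding $e$ and removing the edge out of $t(e)=\mathrm{root}(\mathbf{b})$ closes it up into a cycle traversing each vertex once. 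Simultaneously, I would record the decomposition $\mathbf{a}\cup\mathbf{b} = C \sqcup \mathbf{f}$, where $\mathbf{f}$ is a forest whose edges avoid $C$ and which is rooted on $V(C)$: indeed every vertex not on $C$ has the same out-edge in $\mathbf{a}$ and in $\mathbf{b}$ (only the out-edge of $\mathrm{root}(\mathbf{a})$ and of $\mathrm{root}(\mathbf{b})$ differ), so $\mathbf{a}\setminus C = \mathbf{b}\setminus C$ is a forest rooted at $V(C)$.

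The second step is the lifting/closure statement: lifting $C$ (as a closed walk in $G$) through the covering map $p$ starting from $\mathbf{a}$ produces a path in $\mathcal{T}G$ that returns to $\mathbf{a}$, hence a simple cycle $\mathcal{T}C\subset\mathcal{T}G$ projecting bijectively onto $C$. Here I would invoke the covering-graph property stated just above the proposition (unique path lifting), and check the closure by observing that, starting from the tree $C\cup\mathbf{f}$ with a marked edge of $C$ removed, successively performing the elementary tree-graph move along the edges of $C$ cyclically returns the removed edge to its original position after $|C|$ steps — this is exactly the "rotate the hole around the cycle" picture. The key bookkeeping lemma here is that for any edge $\mathbf{a}'\to\mathbf{b}'$ lying on $\mathcal{T}C$, one has $C(\mathbf{a}',\mathbf{b}')=C$ and moreover the forest part is unchanged; equivalently, the pair $(C,\mathbf{f})$ is an invariant of the cycle $\mathcal{T}C$. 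This gives a well-defined map from edges of $\mathcal{T}G$ to the set of simple cycles $\mathcal{T}C$, constant on each such cycle.

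The third step is to show this really is a partition of $E(\mathcal{T}G)$ into edge-disjoint simple cycles. Every edge of $\mathcal{T}G$ lies on at least one such cycle by construction ($\mathbf{a}\to\mathbf{b}$ lies on $\mathcal{T}C(\mathbf{a},\mathbf{b})$). For disjointness I would argue that the pair $(C,\mathbf{f})$ attached to an edge $\mathbf{a}\to\mathbf{b}$ is determined by that edge — $C=C(\mathbf{a},\mathbf{b})$ and $\mathbf{f}=\mathbf{a}\setminus C$ as above — so two of our cycles sharing an edge carry the same $(C,\mathbf{f})$, and since a cycle $\mathcal{T}C$ is reconstructed from $(C,\mathbf{f})$ (delete each edge of $C$ from $C\cup\mathbf{f}$ in turn), they coincide. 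Finally, for the counting statement: fix a simple cycle $C$ of $G$ with vertex set $W$. The cycles of $\mathcal{T}G$ lying above $C$ are exactly the $\mathcal{T}C_{\mathbf{f}}$ as $\mathbf{f}$ ranges over forests of $G$ rooted at $W$ (note: rooted at the full vertex set $W$ of the cycle, i.e. oriented forests in the sense of Section~\ref{sec:defs} with root set $W$), and the assignment $\mathbf{f}\mapsto\mathcal{T}C_{\mathbf{f}}$ is a bijection because $\mathbf{f}$ is recovered as the common forest part of any edge on $\mathcal{T}C_{\mathbf{f}}$. Hence the number of simple cycles of $\mathcal{T}G$ above $C$ equals the number of oriented forests of $G$ rooted in $W$, which completes the proof.

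I expect the main obstacle to be the second step — rigorously proving the closure of the lifted walk and the invariance of $(C,\mathbf{f})$ along $\mathcal{T}C$. Everything hinges on a clean description of the elementary move restricted to the configuration $C\cup\mathbf{f}$: one must check that performing the move along an edge of $C$ keeps one inside the family $\{\,(C\cup\mathbf{f})\setminus\{f\} : f\in E(C)\,\}$ and cyclically permutes the index $f$, so that after $|C|$ moves one is back to the start. Once this "hole rotation" lemma is in place — essentially a direct unwinding of the definition of $\mathcal{T}G$ in Section~\ref{sec:wlap} — the rest is bookkeeping, and I would keep it brief since, as the authors note, these structural facts are not used elsewhere in the paper.
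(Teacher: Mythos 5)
Your plan is correct and is essentially the paper's own argument: the authors' (informal) proof is precisely the decomposition of $\mathbf{a}\cup\mathbf{b}$ into a simple cycle $C$ plus a forest $\mathbf{f}$ rooted on $V(C)$, the lifting of $C$ to a cycle of $\mathcal{T}G$ via the ``hole rotation'' along $C\cup\mathbf{f}$, and the observation that the cycle of $\mathcal{T}G$ is determined by, and determines, the pair $(C,\mathbf{f})$. You simply spell out the bookkeeping that the paper leaves implicit.
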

In particular, to any outgoing edge  of $\mathbf{a}$ in $\mathcal{T}G$ one can associate the incoming edge of the cycle to which it belongs, and this gives a bijection between incoming and outgoing edges of $\mathbf{a}$.  
An immediate corollary is
\begin{cor}
The graph $\mathcal T G$ is Eulerian: The  number of outgoing or incoming edges of a vertex $\mathbf{a}$ are both equal  to the number of outgoing edges of the root of $\mathbf{a}$ in $G$.
\end{cor}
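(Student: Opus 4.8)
The plan is to deduce both statements directly from the cycle decomposition of the preceding Proposition together with the definition of the edges of $\mathcal{T}G$.

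First I would show that $\mathcal{T}G$ is \emph{balanced}, i.e.\ that every vertex has equal in- and out-degree. By the Proposition, the edge set of $\mathcal{T}G$ splits as a disjoint union of simple cycles. Fix a vertex $\mathbf{a}$. Since each of these cycles is simple (so it traverses each of its vertices exactly once) and they are pairwise edge-disjoint, each cycle in the decomposition either avoids $\mathbf{a}$ entirely, or uses exactly one edge entering $\mathbf{a}$ and exactly one edge leaving $\mathbf{a}$. Summing the contributions over all cycles yields $\#\{\text{incoming edges of }\mathbf{a}\} = \#\{\text{outgoing edges of }\mathbf{a}\}$; concretely, sending an outgoing edge of $\mathbf{a}$ to the incoming edge of $\mathbf{a}$ belonging to the same cycle is the bijection mentioned right before the statement. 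Since $\mathcal{T}G$ is strongly connected (recalled in Section~\ref{sec:wlap}) and a connected balanced directed graph is Eulerian, $\mathcal{T}G$ is Eulerian.

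Next I would identify the common value of these two degrees. By the construction of $\mathcal{T}G$, the outgoing edges of $\mathbf{a}$ are exactly the edges $\mathbf{a}\to\mathbf{b}$ obtained by choosing an edge $e$ of $G$ with source the root $r$ of $\mathbf{a}$, adding $e$ to $\mathbf{a}$, and removing the outgoing edge of $t(e)$; the correspondence $e\mapsto\mathbf{b}$ is the restriction of the projection $p$ to the outgoing edges of $\mathbf{a}$. This map is injective, because from $\mathbf{b}$ one recovers $e$ as the unique edge of $\mathbf{b}$ with source $r$ (in $\mathbf{b}$ the vertex $r$ has outdegree $1$), and surjective by the very definition of the edges of $\mathcal{T}G$. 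Hence the out-degree of $\mathbf{a}$ in $\mathcal{T}G$ equals the out-degree of $r$ in $G$, and by the first paragraph the same holds for the in-degree of $\mathbf{a}$.

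The argument is short and presents no real obstacle; the only point deserving a moment's attention is that each cycle in the decomposition of the Proposition contributes equally to the in- and out-degree at $\mathbf{a}$, which is immediate from the fact that those cycles are simple in the strong sense of the paper (each vertex visited once) and pairwise edge-disjoint.
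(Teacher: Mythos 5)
Your proof is correct and follows the same route the paper takes: the edge-disjoint simple-cycle decomposition of the preceding Proposition yields the bijection between incoming and outgoing edges at each vertex $\mathbf{a}$, and the outgoing edges of $\mathbf{a}$ are identified with the outgoing edges of its root in $G$ via the projection $p$. You merely spell out the details that the paper leaves as ``immediate.''
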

The previous discussion also implies that the measure on vertices of $\mathcal{T}G$ that gives mass $\pi_\mathbf{a}$ to each tree $\mathbf{a}$ is an invariant measure on $\mathcal{T}G$, \textit{i.e.} one has $\pi R =0$.  By using the projection map $p$, it follows that the measure $\mu$ on $V$ given by~\eqref{eq:mctt} satisfies $\mu Q=0$, which gives a simple proof of the Markov Chain tree theorem, see \cite{AT}.

\section{A formula for the determinant of the Schr\"odinger operator}
\label{sec:formula}

We use the same notation as in the previous sections, in particular $V$ is the vertex set of the directed graph $G$, the weighted Laplacian of $G$ is $Q$, its Schr\"odinger operator is $L$, the graph of spanning trees is denoted $\mathcal T G$ and the weighted Laplacian and Schr\"odinger operators of $\mathcal T G$, as in Section~\ref{sec:wlap}, are denoted by $\mathcal Q$ and $\mathcal L$. We assume that $G$ is strongly connected.

\subsection{Eigenvalues of the adjacency matrix, according to Athanasiadis~\cite{An}}\label{sec:An}
If the weights $y_v$ are set to $y_v=-Q_{vv}=\sum_{e:s(e)=v} x_e$, then the Schr\"odinger operator  becomes the adjacency matrix of the graph $G$. We denote it by $M$. It is easy to see that in this case the lifted Schr\"odinger operator $\mathcal{M}$ is the adjacency matrix of the graph $\mathcal{T}G$.
In \cite{An} C.~Athanasiadis proves the following result about eigenvalues of the matrix $\mathcal M$. 

\begin{proposition}[\cite{An}]
The eigenvalues of the adjacency matrix $\mathcal M$ are eigenvalues of the matrices $M_X; X\subset V$. For such an eigenvalue $\gamma$, if $m_X(\gamma)$ denotes its multiplicity in $M_X$, then
its multiplicity in $\mathcal M$ is
$$\sum_{X\subset V}m_X(\gamma)\det(\Gamma_X-I)$$ where $\Gamma_X$ is the matrix $M_X$ with all variables $x_e$ equal to -1.
\end{proposition}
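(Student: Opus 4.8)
The plan is to turn the statement into an identity between characteristic polynomials, then, by a closed‑walk count, into a purely combinatorial identity inside $G$.

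First I would record that the assertion on multiplicities is exactly the identity $\det(tI-\mathcal M)=\prod_{X\subseteq V}\det(tI-M_X)^{\det(\Gamma_X-I)}$, an identity of rational functions in $t$ over $\mathbb C(x_e)$ in which the exponents $\det(\Gamma_X-I)$ may be negative, so that it lives in the fraction field. Taking the logarithmic derivative in $t$ and expanding $(tI-A)^{-1}=\sum_{N\ge0}A^Nt^{-N-1}$, this is equivalent to the family of trace identities $\operatorname{tr}(\mathcal M^N)=\sum_{X\subseteq V}\det(\Gamma_X-I)\operatorname{tr}(M_X^N)$ for all $N$. So it suffices to establish these.

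Now interpret both sides by counting closed walks, as in the path‑counting proof of \cite{An}. Because the weights of $\mathcal TG$ are pulled back along $p$, $\operatorname{tr}(\mathcal M^N)$ is the weighted number of closed walks of length $N$ in $\mathcal TG$. By the path‑lifting property recalled in Section~\ref{sec:defs}, a closed walk of $\mathcal TG$ based at a spanning tree $\mathbf a$ is the unique lift of a closed walk $\pi$ of $G$ based at $\operatorname{root}(\mathbf a)$, and it closes up precisely when $\mathbf a$ is fixed by the monodromy self‑map $\tau_\pi$ of the set of spanning trees of $G$ rooted at the basepoint of $\pi$. Hence, writing $w(\pi)=\prod_{e\in\pi}x_e$ and $\operatorname{fix}(\pi)=\#\{\mathbf a:\tau_\pi(\mathbf a)=\mathbf a\}$, one gets $\operatorname{tr}(\mathcal M^N)=\sum_\pi w(\pi)\operatorname{fix}(\pi)$, summed over closed walks of length $N$ in $G$. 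Similarly $\operatorname{tr}(M_X^N)$ counts closed walks of length $N$ confined to $G_X$, so the right‑hand side equals $\sum_\pi w(\pi)\sum_{X\supseteq\operatorname{supp}(\pi)}\det(\Gamma_X-I)$. Everything therefore reduces to the single ``downstairs'' identity, formulated entirely in $G$: for every closed walk $\pi$ of $G$,
\[
\operatorname{fix}(\pi)\;=\;\sum_{X\supseteq\operatorname{supp}(\pi)}\det(\Gamma_X-I).
\]
To prove it I would first compute $\operatorname{fix}(\pi)$. Following the lifted trajectory, only the out‑edges of the vertices of $X_0:=\operatorname{supp}(\pi)$ are ever modified, so a tree $\mathbf a$ rooted at the basepoint is fixed iff its restriction to $X_0$ is a spanning tree of $G_{X_0}$ rooted at the basepoint and fixed by the intrinsic monodromy of $\pi$ in $G_{X_0}$, while the remaining edges of $\mathbf a$ form an arbitrary spanning forest of $G$ rooted at $X_0$. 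This shows $\operatorname{fix}(\pi)$ depends on $\pi$ only through $X_0$ — so the identity is well posed — and writes it as a product of a forest count of $G$ with a count of $\pi$‑compatible arborescences of $G_{X_0}$. Plugging this in and inverting by Möbius over the boolean lattice, the downstairs identity turns into the statement that $\det(\Gamma_X-I)$ equals a prescribed alternating sum of forest counts over supersets of $X$; one then verifies this by expanding $\det(\Gamma_X-I)$ as a sum over permutations of $X$ whose non‑fixed part is a disjoint union of directed cycles of $G_X$, each cycle contributing the product of its edge weights specialized at $-1$, and matching these cycle covers with the inclusion–exclusion.

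The main obstacle is precisely the combinatorics in the last step. Since $p$ only lifts paths forward — it is not a covering in the topological sense and $\tau_\pi$ is in general far from bijective — one must argue carefully which spanning trees are fixed, tracking the moving root and the successive deletion and re‑creation of out‑edges at the visited vertices, and then match the resulting signed count, term by term, with the cycle‑cover expansion of $\det(\Gamma_X-I)$. Once this lemma is in hand the two reduction steps are routine, and replacing the adjacency matrix by a general Schrödinger operator $L$ amounts only to carrying the potential $y_v$ through the same bookkeeping, as the text indicates.
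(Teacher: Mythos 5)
Your overall strategy --- characteristic polynomial $\to$ traces $\to$ closed walks of $\mathcal TG$ $\to$ fixed spanning trees $\to$ inclusion--exclusion over supports --- is exactly the ``path-counting approach via inclusion-exclusion'' that the paper attributes to Athanasiadis; the paper gives no proof of this Proposition (it is cited from \cite{An}), so this is the right plan to reconstruct. The reductions in your first three paragraphs are sound. But there is a genuine gap precisely where you defer to ``careful combinatorics'': you never compute $\operatorname{fix}(\pi)$, and that computation is the entire content of the argument, not a technicality. It is in fact clean: when the lifted walk leaves a vertex $w$ along $e$ it installs $e$ as the out-edge of $w$, and when it arrives at $w$ it deletes the out-edge of $w$; hence after a closed walk $\pi$ based at $v$ the out-edge of every visited $w\neq v$ is the edge of the \emph{last departure} of $\pi$ from $w$, and these last-exit edges always form a spanning tree of $\operatorname{supp}(\pi)$ rooted at $v$ (following last-exit edges strictly increases the time of last departure, so no cycle can form). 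Consequently $\tau_\pi$ is constant on the $\operatorname{supp}(\pi)$-part of the tree, your count of ``$\pi$-compatible arborescences'' is always exactly $1$, and $\operatorname{fix}(\pi)$ equals the number of spanning forests of $G$ rooted at $X_0=\operatorname{supp}(\pi)$. This lemma must be proved, not postponed.

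More seriously, the terminal identity you aim at cannot be closed in the form you describe. Writing $f(X)$ for the number of forests rooted at $X$, your reduction gives the exponent of $\det(tI-M_Y)$ as $\sum_{X\supseteq Y}(-1)^{|X\setminus Y|}f(X)$. By the matrix-forest theorem $f(X)=\det\bigl((D-A)_{V\setminus X}\bigr)$, with $D$ the diagonal of out-degrees of the \emph{full} graph, and the identity $\det(B-I)=\sum_{S}(-1)^{n-|S|}\det(B_S)$ over principal submatrices turns the alternating sum into $\det\bigl((D-A)_{V\setminus Y}-I\bigr)$ --- a determinant indexed by the \emph{complement} of $Y$ and involving the out-degrees, not a cycle-cover expansion of $\det(\Gamma_Y-I)=\det(-A_Y-I)$ over permutations of $Y$. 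The two do not agree: for the complete graph $K_n$ and $Y=V$ the first gives $1$ while $\det(-J_n)=0$; indeed $\det(\Gamma_W-I)=0$ for every $|W|\ge 2$ in $K_n$, which would contradict the paper's own nonzero multiplicities $m(W)=(k-1)(n-1)^{n-k-1}$, whereas $\det\bigl((D-A)_{V\setminus W}-I\bigr)$ reproduces them exactly. So the Proposition as transcribed here is a garbled citation, and your plan's last step --- ``matching cycle covers of $G_X$ with the inclusion--exclusion'' --- is aimed at the wrong determinant and would fail; the correct endpoint of your otherwise correct computation is the complement-indexed determinant above.
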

The previous theorem implies the following equation
$$\det(zI-\mathcal M)=\prod_{X\subset V}\det(zI-M_X)^{l(X)}$$
where $l(X)=\det(\Gamma_X-I)$. Observe however that the multiplicities $l(X)$ can be negative in this equation. In order to get nonnegative multiplicities, we will use the  following fact which is easy to check:
 for any
$X\subset V$, if we let $X=\cup_iW_i$ be its decomposition into strongly connected components, then
the graph induces an order relation between the $W_i$ from which one deduces the factorization
$$\det(zI-M_X)=\prod_i\det(zI-M_{W_i}).$$
It follows that 
\begin{equation}\label{n(X)}
\det(zI-\mathcal M)=\prod_{W}\det(zI-M_W)^{n(W)}
\end{equation}
where the product is over strongly connected subsets $W\subset V$
and 
\begin{equation}\label{supset}
n(W)=\sum_{X\supset_{sc} W}l(X)
\end{equation}
where $X\supset_{sc} W$ means that $W$ is a strongly connected component of $X$. As we will see later
(Lemma~\ref{W_irred}),
the polynomials $\det(zI-M_W)^{n(W)}$ for $W$ strongly connected are distinct prime polynomials, therefore the formula \ref{n(X)} uniquely defines the multiplicities $n(X)$ which therefore are nonnegative integers. This property however is not apparent from the formula \ref{supset}.

In this paper, we will generalize this result to the case of Schr\"odinger operators and give another expression for the multiplicities, as the cardinality of a set of combinatorial objects (hence the nonnegativity will be apparent). We will also explicitly exhibit a block decomposition of the matrix $\mathcal L$ that underlies the factorization of the characteristic polynomial.

Although Athanasiadis's results were stated for adjacency matrices, his proof actually extends easily to the more general case of Schr\"odinger operators which we consider here (with the same multiplicities). However the link between the two approaches is yet to be understood.

\subsection{The exploration algorithm}\label{sec:algo} 
Our formula for the determinant of $\mathcal L$ (given in Theorem~\ref{thm:main}) involves certain combinatorial quantities defined through an algorithmic exploration of the graph. The exploration algorithm associates to any spanning tree $\mathbf{a}$ of $G$ two subsets of vertices of $G$, denoted by $\phi(\mathbf{a})$ and $\psi(\mathbf{a})$. 
Roughly speaking, the algorithm performs a breadth first search on the graph $G$, but only the vertices that are discovered along edges belonging to the tree $\mathbf{a}$ are considered as explored. Vertices discovered along edges not in $\mathbf{a}$ are immediately ``erased''. This may prevent the algorithm from exploring the whole vertex set and, at the end, we call $\phi(\mathbf{a})$ the set of explored vertices.
The set $\psi(\mathbf{a})$ is the  strongly connected component of the root vertex in $\phi(\mathbf{a})$.

 We now describe more precisely the algorithm. Because it is based on breadth first search, our algorithm depends on an ordering of the vertices of $V$. This ordering can be arbitrary but it is important to fix it once and for all:
\begin{quote}\it From now on  we fix a total ordering of the vertex set $V$ of $G$.
\end{quote}
In particular on examples and special cases considered in the paper, if the vertex set is an integer interval, we will equip it with the natural ordering on integers without further notice (this is the case for example on Figure~\ref{fig:exampleGraph}).

\noindent\rule{\textwidth}{0.7pt}
\begin{tabbing} 
{\bf Exploration algorithm.}\\
{\bf Input:}\ \ \ \ \   \=A spanning tree $\mathbf{a}$ of the directed graph $G=(V,E)$, rooted at $v$. \\
{\bf Output:} 
\>A subset of vertices $\phi(\mathbf{a})\subset V$;\\
\>A subset of vertices $\psi(\mathbf{a})\subset \phi(\mathbf{a})$, such that $G_{|\psi(\mathbf{a})}$ is strongly connected.
\end{tabbing}
\begin{tabbing} 
\noindent {\bf Running variables: }\= - a set $A$ of vertices of $G$;\\
\> - an ordered list ${\bf L}$ of edges of $G$ (first in, first out);\\
\> - a set $F$ of edges of $G$.
\end{tabbing}
\noindent {\bf Initialization:}
Set $A:=\{v\}$, $F:=\{e\in E|s(e)\ne v\}$, and let ${\bf L}$ be the list of edges of $G$ with target $v$, ordered by increasing source.\\ 
\noindent {\bf Iteration:} 
While ${\bf L}$ is not empty, pick the first edge $e$ in ${\bf L}$ and let $w$ be its source:
\begin{tabbing} 
\ \ \ \ \= \noindent {\bf If} \= $e$ belongs to the tree $\mathbf{a}$: \\
\>\> add 
$w$ to $A$;\\
\>\>delete all edges with source $w$ from ${\bf L}$;\\
\>\> append at the end of ${\bf L}$ all the edges in $F$ with target $w$, by increasing source.\\
\> {\bf else} \\ 
\>\> delete from ${\bf L}$ and $F$ all the edges with source or target $w$ in $E$.\\ 
\>\> (in this case we say that the vertex $w$ has been \emph{erased}) 
\end{tabbing}

\noindent {\bf Termination:} 
We let $\phi(\mathbf{a}):=A$ be the terminal value of the evolving set $A$. The directed graph $G$ induces a directed graph on $\phi(\mathbf{a})$, and we let $\psi(\mathbf{a})$ be the strongly connected component of $v$ in this graph.\\
\noindent\rule{\textwidth}{0.7pt}

 Observe that if a vertex $w$ is picked up by the algorithm at some iteration, it will not appear again, this implies that the algorithm always stops after a finite number of steps.
We refer the reader to Figure~\ref{fig:exampleAlgo} for an example of application of the algorithm. The reader can also look back at Figure~\ref{fig:exampleGraph} on which, for each spanning tree $\mathbf{a}$, the value of the set $\psi(\mathbf{a})$ is indicated.
\begin{figure}[h]
\includegraphics[width=0.7\linewidth]{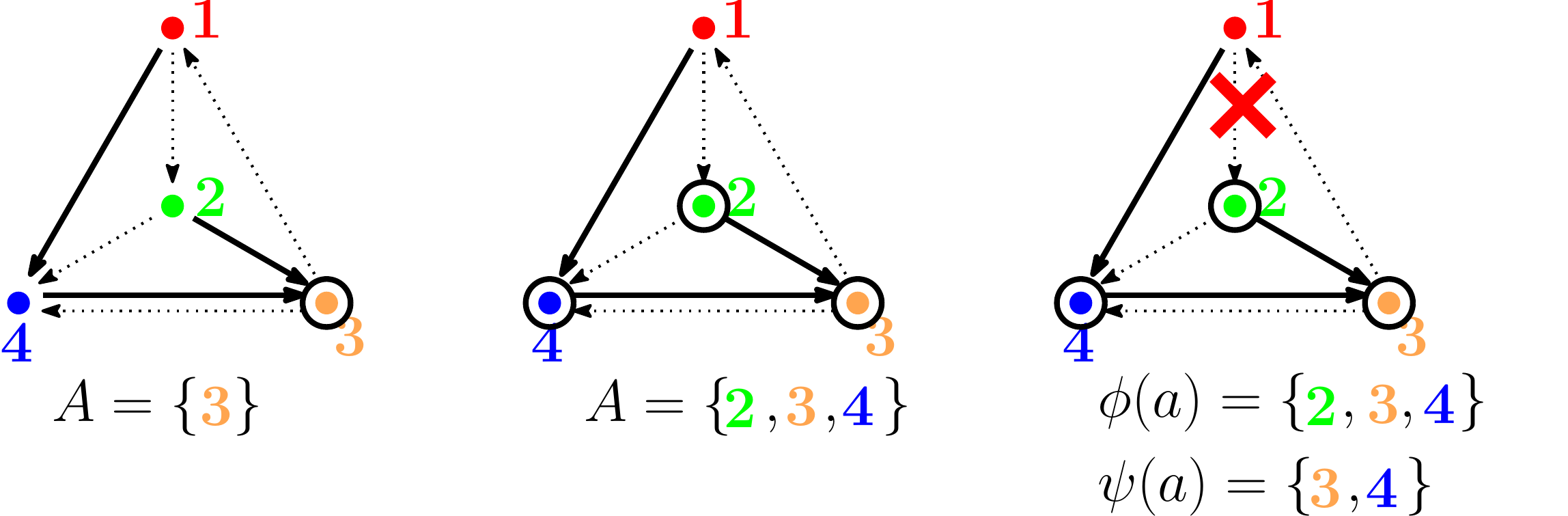}
\caption{\textit{Left:} in plain edges, a spanning tree $\mathbf{a}$ of the graph of Figure~\ref{fig:exampleGraph}. We initialize the set $A$ to $\{3\}$ since $3$ is the root of $\mathbf{a}$. \textit{Center:} at the first two iterations of the main loop of the algorithm, we consider the edges $(2,3)$ and $(4,3)$, that belong to the tree $\mathbf{a}$: the vertices $2$ and $4$ are thus added to the set $A$. \textit{Right:} at the next iteration, we consider the edge $(1,2)$ that does not belong to $\mathbf{a}$. The vertex $1$ is thus erased. The set $A$ will not evolve until the termination step, and we thus get $\phi(\mathbf{a})=\{2,3,4\}$. The strongly connected component of $3$ inside $\{2,3,4\}$ in the original graph is $\{3,4\}$, which gives the value of $\psi(\mathbf{a})$.}
\label{fig:exampleAlgo}
\end{figure}

With the exploration algorithm, we can now define the multiplicities that are necessary to state our  main theorem.
\begin{definition} Let $W$ be a strongly connected subset of $V$, and $w\in W$. The  \emph{multiplicity of $W$ at $w$}  is the number 
 $m(W,w)$ of oriented spanning trees $\mathbf{a}$ rooted at $w$ such that $\psi(\mathbf{a})=W$.
\end{definition}
\noindent For any $v \in V$, there exists a unique tree $\mathbf{a}_{V,v}$ rooted at $v$ such that $\psi(a_{V,v})=V$. This tree is obtained by performing a {\sl breadth first search} on $G$ starting from $v$ and keeping the edges of first discovery of each vertex. We thus have:
\begin{lemma}\label{bfs} For any $v\in V$ one has
$m(V,v)=1$.
\end{lemma}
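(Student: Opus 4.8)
The plan is to establish two things: first, that for each $v \in V$ there is at least one spanning tree $\mathbf{a}$ rooted at $v$ with $\psi(\mathbf{a}) = V$, and second, that such a tree is unique. For existence, I would take $\mathbf{a}_{V,v}$ to be the breadth-first search tree of $G$ started from $v$, with respect to the fixed total ordering on $V$: that is, explore $G$ from $v$ in BFS order, and for each vertex $u \neq v$ keep the edge along which $u$ is first discovered. Since $G$ is strongly connected, every vertex is reachable from $v$, so this BFS visits all of $V$ and $\mathbf{a}_{V,v}$ is genuinely a spanning tree rooted at $v$. I would then run the exploration algorithm on this tree and check, by induction on the iterations of the main loop, that the evolving set $A$ coincides with the set of vertices discovered so far by the BFS, and in particular no vertex is ever erased: when the algorithm picks the first edge $e$ of $\mathbf{L}$ with source $w$, that edge is in $\mathbf{a}_{V,v}$ precisely because $w$ was first discovered along $e$ in the BFS, and the ordering conventions in the list $\mathbf{L}$ (edges appended by increasing source, processed first-in-first-out) are exactly those of BFS. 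Hence $\phi(\mathbf{a}_{V,v}) = V$, and since $V$ is strongly connected, $\psi(\mathbf{a}_{V,v}) = V$.

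For uniqueness, suppose $\mathbf{a}$ is any spanning tree rooted at $v$ with $\psi(\mathbf{a}) = V$; then in particular $\phi(\mathbf{a}) = V$, so no vertex is erased during the run of the algorithm on $\mathbf{a}$. I claim this forces $\mathbf{a} = \mathbf{a}_{V,v}$. The key point is that if no erasure ever happens, then the algorithm on input $\mathbf{a}$ behaves exactly like a BFS from $v$: the list $\mathbf{L}$ is never truncated by the \emph{else} branch, so the order in which vertices are added to $A$, and the edges along which they are added, are determined entirely by $v$, the graph $G$, the total order on $V$, and the requirement that each added edge lies in $\mathbf{a}$. I would argue inductively that the edge by which each vertex $w \neq v$ enters $A$ is forced to be the BFS-first-discovery edge of $w$: at the moment $w$ is about to be added, the edges of $\mathbf{L}$ incident to $w$ that have already been enqueued are precisely the edges from already-explored vertices to $w$, in increasing source order, and the first one that belongs to $\mathbf{a}$ must be processed; for this to equal the unique outgoing-tree structure of $\mathbf{a}$ at $w$ — recall $w$ has out-degree $1$ in $\mathbf{a}$, with its single out-edge going toward the root — wait, I should be careful here: in an oriented spanning tree rooted at $v$, edges point \emph{toward} the root, so the edge of $\mathbf{a}$ incident to $w$ that matters for the algorithm is the out-edge of $w$ in $\mathbf{a}$, and the algorithm discovers $w$ when it processes an edge \emph{into} some already-explored vertex whose source is $w$. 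Re-examining: the edge $e$ picked has source $w$ and target an already-processed vertex, and $e \in \mathbf{a}$ means $e$ is the out-edge of $w$ in $\mathbf{a}$. So the tree $\mathbf{a}$ is recovered edge-by-edge: the out-edge of each $w \neq v$ is the first (in the order dictated by the BFS queue) edge out of $w$ whose target has already been explored — and this is a deterministic recipe, independent of $\mathbf{a}$, which therefore must coincide with $\mathbf{a}_{V,v}$.

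Putting existence and uniqueness together gives exactly one tree $\mathbf{a}$ rooted at $v$ with $\psi(\mathbf{a}) = V$, i.e. $m(V,v) = 1$. I expect the main obstacle to be the bookkeeping in the uniqueness argument: one has to track carefully the state of the list $\mathbf{L}$ and the set $F$ across iterations and verify that, under the no-erasure hypothesis, the algorithm's choices are genuinely forced, which amounts to a somewhat fiddly induction synchronizing the algorithm's execution with an abstract BFS. The existence half is comparatively routine once the right induction hypothesis (that $A$ equals the BFS-visited set and no erasures occur) is set up. A minor point to handle cleanly is the role of the total ordering: it enters both in the initialization (edges into $v$ sorted by increasing source) and in each append step, and one must check these are consistent with a single well-defined BFS order.
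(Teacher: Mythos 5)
Your proposal is correct and takes essentially the same route as the paper, which simply observes that the unique tree $\mathbf{a}$ rooted at $v$ with $\psi(\mathbf{a})=V$ is the breadth-first-search tree of first-discovery edges. Your existence and uniqueness arguments (no erasure occurs on the BFS tree; conversely, no erasure forces every processed edge to be the first-discovery edge, so the tree is determined) just spell out the details the paper leaves implicit.
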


More generally, we will prove in Section~\ref{sectheta} the following fact
\begin{lemdef}\label{mW}
For any strongly connected subset $W \subset V$, the multiplicity  $m(W,w)$  depends neither on $w\in W$ nor on the ordering of the elements of $V$.
We will call $m(W)$ this common value.
\end{lemdef}
\begin{proof} See Section~\ref{sectheta}.
\end{proof}

\subsection{Main result}
 Our main result is the following theorem.
\begin{theorem}\label{thm:main}
Let $G$ be a strongly connected directed graph. Then
the determinant of the lifted Schr\"odinger operator on $\mathcal{T}G$ is given by:

\begin{equation}\label{main1}\det(\mathcal L)=\prod_{\stackrel{
W\subseteq V}{W s.c.}}\det(L_W)^{m(W)}\end{equation}
where the product is over all  strongly connected subsets $W\subseteq V$.
\end{theorem}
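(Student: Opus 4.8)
\textbf{Proof plan for Theorem~\ref{thm:main}.}

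The plan is to establish the factorization by constructing, for each strongly connected subset $W\subseteq V$, a family of $\mathcal L$-invariant subspaces of the space of measures on $\mathcal TV$ on which $\mathcal L$ acts (up to change of basis) as $L_W$, with the total multiplicity of $W$ across these subspaces equal to $m(W)$, and then arguing by a dimension count that these subspaces exhaust everything. Concretely, first I would observe that the projection $p:\mathcal TG\to G$ induces a pushforward on measures that intertwines $\mathcal L$ and $L$, which handles the ``top'' block $W=V$ and, by Lemma~\ref{bfs}, accounts for the factor $\det(L_V)^{1}=\det(L)$. The main work is to produce the remaining blocks. Given a strongly connected $W$ and a spanning tree $\mathbf a$ with $\psi(\mathbf a)=W$ and root $w\in W$, the idea is to use the exploration algorithm as a bookkeeping device: the set $\phi(\mathbf a)$ of explored vertices, together with the edges of $\mathbf a$ inside it, records a canonical ``sub-breadth-first-search'' structure, and I would attach to $\mathbf a$ a subspace $V_{\mathbf a}$ spanned by measures supported on trees that agree with $\mathbf a$ outside a controlled region, the dynamics of $\mathcal L$ on $V_{\mathbf a}$ mimicking the walk on $G_W$ and hence being conjugate to $L_W$ (more precisely to $L^{V\setminus W}$ restricted appropriately, using the notation $L_W=L^{V\setminus W}$ from the excerpt). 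Summing over all such $\mathbf a$ gives the exponent $m(W)=m(W,w)$ by definition of the multiplicity.

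The key structural steps, in order, are: (1) define precisely, for each spanning tree $\mathbf a$, the invariant subspace $V_{\mathbf a}$ and a basis of it indexed by something like ``configurations of a particle exploring $W=\psi(\mathbf a)$ while the rest of $\mathbf a$ is frozen''; (2) verify invariance under both the off-diagonal part $\mathcal Q$ and the potential $\mathcal Y$ — invariance under $\mathcal Y$ forces the basis to be built from trees whose roots stay inside $W$, which is exactly why $\psi$ rather than $\phi$ is the relevant set; (3) identify the matrix of $\mathcal L$ on $V_{\mathbf a}$ with $L_W$, up to a monomial rescaling of basis vectors by the weights $\pi$ (the $\pi_{\mathbf a}$ normalization from~\eqref{arbres} should make the off-diagonal weights come out exactly as the $x_e$); (4) show the subspaces $V_{\mathbf a}$, over all spanning trees $\mathbf a$, are in direct sum — or at least that the characteristic polynomials multiply correctly — and (5) close the argument by a degree/dimension count: $\sum_{W\ s.c.}|W|\,m(W)$ must equal $|\mathcal TV|$, which one proves combinatorially by showing every spanning tree of $G$ is counted exactly once (it belongs to the block of the unique $\mathbf a$ dictating its frozen part). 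Lemma~\ref{W_irred} (the irreducibility/distinctness of the polynomials $\det(L_W)$ for $W$ strongly connected) is then invoked to conclude that the exponents are forced and well-defined, which also sets up the later proof of Definition-Lemma~\ref{mW}.

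I expect the main obstacle to be step (1)–(2): pinning down the \emph{right} definition of $V_{\mathbf a}$ so that it is genuinely $\mathcal L$-invariant and not merely $\mathcal Q$-invariant. The subtlety is that adding the root edge of a tree can move the root out of $W$, or can ``reconnect'' a frozen subtree in a way that takes us outside the span; the exploration algorithm's erasing rule is presumably designed exactly to cut the graph along edges that would cause such leaks, so the proof that $V_{\mathbf a}$ is closed really amounts to checking that the algorithm's output $\psi(\mathbf a)$ has the property that no transition of $\mathcal L$ from a tree in the block escapes it. A secondary difficulty is the direct-sum / no-overcounting claim in step (4)–(5): a priori different $\mathbf a$ could yield overlapping subspaces, so one needs a clean invariant — most likely the pair (frozen forest outside $\phi(\mathbf a)$, combinatorial type of the exploration) — that assigns each spanning tree of $G$ to exactly one block, after which the dimension count is a matter of reindexing. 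The monomial normalization in step (3) is routine but must be done carefully to match~\eqref{eq:polbiane} in the specialization to spanning-tree counts.
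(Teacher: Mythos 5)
Your high-level architecture matches the paper's: construct invariant (sub/quotient) spaces on which $\mathcal L$ acts as $L_W$, use irreducibility of the $\det(L_W)$ (Lemma~\ref{W_irred}) to multiply the resulting divisibilities, and close with a degree count against $|\mathcal TV|$. However, two of your steps have genuine gaps, and they are exactly the two places where the paper's proof has real content.

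First, the invariant subspaces. The space you describe in steps (1)--(3) --- measures supported on trees that agree with a fixed tree $\mathbf a$ outside $W=\psi(\mathbf a)$, i.e.\ on trees of the form $\mathbf t\times\mathbf f$ with the forest $\mathbf f$ frozen --- is \emph{not} $\mathcal L$-invariant, and the paper says so explicitly: mass leaks out of $W$ and can re-enter through boundary vertices. You correctly identify this as the main obstacle, but the proposal does not supply the mechanism that resolves it. The paper's device is to replace $\delta_{\mathbf f}$ by the signed measure $\nu_{\mathbf f}=\sum_{\frak F\subset\frak E_{\mathbf b}}(-1)^{|\frak F|}\delta_{\mathbf f_{\frak F}}$, an alternating sum over subsets of the \emph{erased} vertices of the exploration (which coincide with the boundary vertices, Lemma~\ref{erase}); a path-pairing argument then shows that $\delta_{\mathbf t}\otimes\nu_{\mathbf f}\mathcal L^n$ vanishes on every tree rooted at a boundary point, so the $\mathcal L$-orbit $\mathcal E$ of these measures, modulo the subspace $\mathcal F$ of measures rooted outside $W$, is invariant and carries $m(W,w)$ copies of $L_W$. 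One also needs the linear independence of the $\nu_{\mathbf f}$, proved via a gradation/triangularity argument. Without the signed sums, the "frozen forest" spaces do not close up under $\mathcal L$, and there is no obvious repair.

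Second, the closing count in step (5). You propose to prove $\sum_{W}|W|\,m(W)=|\mathcal TV|$ combinatorially by assigning each spanning tree to a unique block. But the combinatorial identity available by definition is $|\mathcal TV|=\sum_W\sum_{w\in W}m(W,w)$, and converting $\sum_{w\in W}m(W,w)$ into $|W|\,m(W)$ presupposes that $m(W,w)$ is independent of $w$ --- which is Definition-Lemma~\ref{mW}, not yet available at this point (the paper explicitly states that no direct combinatorial proof of this independence is known and leaves it as an open problem). The paper instead runs a squeeze: divisibility gives $|\mathcal TV|=\deg\det(\mathcal L)\ \geq\ \sum_W|W|\max_{w}m(W,w)$, while the definitional identity gives $|\mathcal TV|=\sum_W\sum_w m(W,w)\leq\sum_W|W|\max_w m(W,w)$; equality forces $m(W,w)$ to be constant in $w$ and yields the exact multiplicities simultaneously. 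Your step (5) as written would be circular unless reorganized along these lines.
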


From the previous result we will deduce the following formula for $\Phi_G$. Recall that we defined $\pi_{\bf a}$ in (\ref{arbres}) as the product over the weights of the edges of a tree $\bf a$ and similarly $\pi_{\bf f}$ (\ref{forets}) for a forest. Analogously one  defines the weight of a spanning tree of 
$\mathcal{T}G$ as the product of the weights of its edges. We define the polynomials $ F_{G}$, $F_{\mathcal{T}G}$ and 
$\Psi_W$ as the sums of these weights over, respectively, spanning trees of $G$, of $\mathcal{T}G$, and of forests of $G$ rooted in $W$. The Markov chain tree theorem implies that the generating function of the spanning trees of a graph is the coefficient of the term of degree 1 in the characteristic polynomial of the Laplacian matrix. Using this fact and Theorem \ref{thm:main} we obtain the following result.
\begin{theorem}[Spanning trees of the tree graph]\label{thm:mainSpanning}
The generating polynomial $F_{\mathcal{T}G}$ of spanning trees of the tree graph is given by
\begin{eqnarray}\label{eq:ratio}
F_{\mathcal{T}G} = \Phi_G F_{G},
\end{eqnarray}
 where
\begin{equation}\label{main2}
\Phi_G=\prod_{\stackrel{W\subsetneq V}{W s.c.}}(\Psi_{V\setminus W})^{m(W)},
\end{equation}
where the product is over all proper strongly connected subsets $W\subsetneq V$.
\end{theorem}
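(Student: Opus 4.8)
The plan is to derive Theorem~\ref{thm:mainSpanning} from Theorem~\ref{thm:main} by specializing the potential to a single common scalar and extracting the appropriate coefficient. Concretely, I would set $y_v = -z$ for all $v\in V$, so that $L = Q - zI$ and $\mathcal L = \mathcal Q - zI$ (using that the lifted potential $\mathcal Y$ then equals $-zI$ on $\mathcal{T}V$). With this specialization $\det(\mathcal L) = \det(\mathcal Q - zI)$ is the characteristic polynomial of $\mathcal Q$, and $\det(L_W) = \det(Q_W - zI)$ is the characteristic polynomial of $Q_W$, so Theorem~\ref{thm:main} reads
\begin{equation}\label{eq:charspec}
\det(\mathcal Q - zI) = \prod_{\stackrel{W\subseteq V}{W s.c.}}\det(Q_W - zI)^{m(W)}.
\end{equation}
The key point is then that, for a weighted Laplacian of a strongly connected graph on vertex set $U$, the matrix has $1$ in its (left) kernel of the action on functions only when $U$ is the whole support — more relevantly, $\det(Q_U) = 0$ (since $Q_U$ kills the all-ones vector) but the coefficient of $z^1$ in $\det(zI - Q_U)$, up to sign, is exactly the generating polynomial of spanning trees of $G_U$: this is the Markov Chain Tree theorem reformulated via the matrix-tree theorem, as recalled after~\eqref{eq:mctt} and in the paragraph preceding the statement. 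I would make this precise: writing $n = |V|$, the lowest-degree term of $\det(zI - Q)$ is $c_1 z$ with $c_1 = (-1)^{n-1}\sum_{\mathbf a \text{ spanning tree of }G}\pi_{\mathbf a} = (-1)^{n-1} F_G$ (a nonzero polynomial because $G$ is strongly connected), and similarly the lowest-degree term of $\det(zI - \mathcal Q)$ is $(-1)^{N-1} F_{\mathcal{T}G} z$ where $N = |\mathcal TV|$.

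Next I would evaluate the order of vanishing at $z = 0$ on both sides of~\eqref{eq:charspec}. On the right, each strongly connected $W \subsetneq V$ contributes $\det(Q_W - zI) = \det(-zI + Q_W)$; since $G_W$ is strongly connected, $\det(Q_W) = 0$ and the factor vanishes to order exactly $1$ at $z=0$, with linear coefficient (up to the sign $(-1)^{|W|-1}$) equal to $\det(Q_W)$-minors, i.e.\ equal to $(-1)^{|W|-1}\Psi_W'$ where $\Psi_W'$ is the spanning-tree polynomial of $G_W$. But we want $\Psi_{V\setminus W}$, the \emph{forest} polynomial rooted in $V\setminus W$ — here I would invoke the matrix-tree theorem in the form $\det(Q^{V\setminus W}) = \det(Q_W) = \sum_{\mathbf f \in \mathcal F_{V\setminus W}} \pi_{\mathbf f}$ stated in the excerpt, so that the linear coefficient in $z$ of $\det(Q_W - zI)$ is $\pm \Psi_{V\setminus W}$. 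Meanwhile the factor for $W = V$ has $m(V) = 1$ by Lemma~\ref{bfs}, and $\det(Q_V - zI) = \det(Q - zI)$ vanishes to order $1$ with linear coefficient $\pm F_G$. So the right-hand side vanishes to total order $1 + \sum_{W \subsetneq V, W s.c.} m(W)$ at $z = 0$, and — since the left side $\det(\mathcal Q - zI)$ vanishes to order exactly $1$ (strong connectedness of $\mathcal TG$) — we must have $\sum_{W\subsetneq V}m(W) = 0$... which is false, so the correct reading is that the \emph{only} factor on the right contributing positive vanishing order, after collecting, must match; I would instead divide~\eqref{eq:charspec} through by $z$ appropriately: write $\det(\mathcal Q - zI) = z \cdot R(z)$ and $\det(Q_W - zI) = z\cdot R_W(z)$ for each $W$, so that
\begin{equation}\label{eq:afterdivide}
z\, R(z) = \prod_{\stackrel{W\subseteq V}{W s.c.}} \bigl(z\, R_W(z)\bigr)^{m(W)},
\end{equation}
forcing $1 = \sum_{W} m(W) - (\text{excess})$ — so in fact the consistency of degrees in~\eqref{eq:afterdivide} already yields $\sum_{W s.c.} m(W)$ constraints, and, more to the point, setting $z=0$ in the identity $R(z) = z^{\,(\sum m(W)) - 1}\prod_W R_W(z)^{m(W)}$ shows $\sum_{W s.c.}m(W) = 1$ is impossible unless all but finitely... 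I would clean this up by simply observing that evaluating the coefficient of the lowest power of $z$ on both sides of~\eqref{eq:charspec} gives
$$(-1)^{N-1}F_{\mathcal TG} = \Bigl(\prod_{\stackrel{W\subsetneq V}{W s.c.}}\bigl((-1)^{|W|-1}\Psi_{V\setminus W}\bigr)^{m(W)}\Bigr)\cdot\bigl((-1)^{n-1}F_G\bigr),$$
and then checking that the signs combine to $(-1)^{N-1} = (-1)^{n-1}\prod_W (-1)^{(|W|-1)m(W)}$ — equivalently that $N - 1 \equiv (n-1) + \sum_W (|W|-1)m(W) \pmod 2$ — which must hold because both sides of~\eqref{eq:charspec} are genuine polynomial identities and the sign is forced; dividing out the common sign and using that $F_G \ne 0$ (so we may cancel it, or rather read off~\eqref{eq:ratio} as the definition-level identity $F_{\mathcal TG} = \Phi_G F_G$ with $\Phi_G = \prod_{W\subsetneq V, W s.c.}\Psi_{V\setminus W}^{m(W)}$) finishes the proof. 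Finally I should check compatibility with~\eqref{eq:polbiane}: the polynomial $\Phi_G$ defined here agrees with the $\Phi_G$ of~\cite{B} because $\det(\mathcal Q^{\mathbf a}) = \pi_{\mathbf a}\Phi_G$ and summing over trees $\mathbf a$ (matrix-tree theorem applied to $\mathcal Q$) gives $F_{\mathcal TG} = \Phi_G\sum_{\mathbf a}\pi_{\mathbf a} = \Phi_G F_G$.

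The main obstacle I anticipate is purely bookkeeping rather than conceptual: getting the signs and the exponent of $z$ exactly right when passing from characteristic polynomials to spanning-tree/forest generating functions, and in particular correctly matching $\det(Q_W) = \det(Q^{V\setminus W})$ with the forest polynomial $\Psi_{V\setminus W}$ (not the spanning-tree polynomial of $G_W$) via the matrix-tree theorem as stated in the excerpt. One subtlety worth stating carefully is why each factor $\det(Q_W - zI)$ for $W$ strongly connected and $W\ne V$ vanishes to order \emph{exactly} one at $z=0$: this is because $Q_W$, being the Laplacian of the strongly connected graph $G_W$, has a one-dimensional kernel (the all-ones function) and no higher-order degeneracy, so $0$ is a simple eigenvalue — which is also precisely the input needed to know that $\Psi_{V\setminus W}\ne 0$, i.e.\ that $G$ admits at least one forest rooted in $V\setminus W$ (true since $G$ is strongly connected). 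With these facts in hand the argument is a one-line coefficient extraction from Theorem~\ref{thm:main}.
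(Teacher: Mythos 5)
Your overall strategy --- specialize $y_v=-z$ so that Theorem~\ref{thm:main} becomes an identity of characteristic polynomials, then extract the lowest-order coefficient in $z$ --- is exactly the paper's (very short) proof. But the execution contains a genuine error that derails the middle of your argument: you assert, repeatedly, that $\det(Q_W)=0$ for a proper strongly connected $W$ ``because $Q_W$ kills the all-ones vector''. This is false. The matrix $Q_W=Q^{V\setminus W}$ is the \emph{principal submatrix} of the Laplacian of $G$ indexed by $W$; it is not the Laplacian of the induced graph $G_W$. Its diagonal entries $-\sum_{e:s(e)=v}x_e$ still count the edges leaving $W$, so the row sum at $v\in W$ equals minus the total weight of edges from $v$ to $V\setminus W$, which is nonzero for at least one $v$ whenever $W\subsetneq V$ and $G$ is strongly connected. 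Consequently $\det(Q_W)=\det(Q^{V\setminus W})=\pm\Psi_{V\setminus W}\neq 0$: the factor $\det(Q_W-zI)$ does \emph{not} vanish at $z=0$ for proper $W$. This wrong premise is precisely what produces the absurdity $\sum_{W\subsetneq V}m(W)=0$ that you notice; the contradiction is a symptom of the error, not of a need to ``divide by $z$ appropriately''. Your subsequent claim that ``the linear coefficient in $z$ of $\det(Q_W-zI)$ is $\pm\Psi_{V\setminus W}$'' perpetuates the same confusion: it is the \emph{constant} coefficient that equals $\det(Q_W)=\pm\Psi_{V\setminus W}$.

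Once this is corrected the proof is immediate and is what the paper intends: on the right-hand side of the specialized identity, only the factor $W=V$ vanishes at $z=0$ (to order exactly one, with linear coefficient $\pm F_G$, by strong connectedness of $G$ and the matrix-tree theorem), while every proper strongly connected $W$ contributes the nonzero constant $\pm\Psi_{V\setminus W}$ at $z=0$; the left-hand side vanishes to order exactly one with linear coefficient $\pm F_{\mathcal T G}$, by strong connectedness of $\mathcal T G$. Comparing the coefficients of $z^{1}$ gives \eqref{eq:ratio}--\eqref{main2} up to sign, and the sign is forced since both generating polynomials have nonnegative integer coefficients and are nonzero. Your closing remark --- that summing \eqref{eq:polbiane} over $\mathbf a$ gives $F_{\mathcal T G}=\Phi_G F_G$ --- is correct and is the observation the paper makes after the theorem statement, but it only identifies the normalization of $\Phi_G$ with that of \eqref{eq:polbiane}; it does not by itself yield the product formula \eqref{main2}, which still requires the coefficient extraction above.
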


Note that from \eqref{eq:polbiane}
and the matrix-tree theorem, Theorem~\ref{thm:mainSpanning} 
also gives a formula for spanning trees of $\mathcal{T}G$ rooted at a particular spanning tree $\mathbf{a}$.

Note also that summing over all trees $\bf a$ in \eqref{eq:polbiane} and using the matrix-tree theorem, we see that the constant $\Phi_G$ in \eqref{eq:polbiane} is indeed the same as the one in~\eqref{eq:ratio}.

\begin{remark}
Both sides of Equation (\ref{main2}) have a natural combinatorial meaning; the left hand side is a generating function for spanning trees of $\mathcal T G$, while the right hand side is the generating function of some tuples of forests on $G$. It would be interesting to have a direct combinatorial proof of this identity.
\end{remark}

As an example, on the graph $G$ of Figure~\ref{fig:exampleGraph}, there are $7$ strongly connected proper subsets of vertices and we have: $m(\{1\})=m(\{2\})=m(\{3\})=0$, $m(\{4\})=3$, $m(\{3,4\})=2$, $m(\{1,2,3\})=0$, $m(\{1,3,4\})=1$, and $m(V)=1$.
It follows that the characteristic polynomial of the Schr\"odinger operator  of the graph $\mathcal{T}G$ in this case is given by
$$\det(\mathcal{L})=\det(L_4)^3\det(L_{3,4})^2\det(L_{1,3,4})
\det(L).
$$
This identity can of course also be checked by a direct computation.

\section{Proof of the main results}\label{proof}

In this section we prove the main results. We assume as above that $G$ is strongly connected and we use the same notation as in previous sections.

\subsection{Polynomials}
In order to prove Theorem~\ref{thm:main} we will show that each factor in \eqref{main1} appears with, at least, the wanted multiplicity and conclude by a degree argument. We start by showing that these factors are irreducible. 
\begin{lemma}\label{W_irred}
If $W\subset V$ is a proper  strongly connected subset then the  polynomial $\det(L_W)$ is irreducible as a polynomial in the variables $(x_e)_{e\in E}; (y_v)_{v\in W}$.
\end{lemma}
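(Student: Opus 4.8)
The plan is to prove irreducibility of $\det(L_W)$ by exhibiting it as a polynomial that is linear and with "enough" nonzero coefficients in a well-chosen set of variables, using the matrix-tree theorem to control its monomials. Since $W$ is strongly connected and proper, $L_W$ is the matrix obtained from $L$ by keeping rows and columns in $W$; writing $L_W = Q_W + Y_W$ where $Q_W$ already incorporates, on the diagonal, the weights $x_e$ of edges leaving $W$, we see that $\det(L_W)$ is a polynomial in the $x_e$ (for $e$ with source in $W$) and the $y_v$ ($v\in W$). The first key observation is that $\det(L_W)$ is a polynomial of degree exactly $1$ in each variable $y_v$, $v\in W$, since $Y_W$ is diagonal; hence, expanding the determinant multilinearly along the diagonal $y_v$-entries, one gets
\begin{equation}\label{eq:LWexpand}
\det(L_W)=\sum_{S\subseteq W}\Big(\prod_{v\in S}y_v\Big)\det\big((Q_W)^{S}\big),
\end{equation}
where $(Q_W)^{S}$ denotes $Q_W$ with rows and columns in $S$ removed, i.e. $Q_{W\setminus S}$ in the paper's notation. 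In particular the coefficient of $\prod_{v\in W}y_v$ is $1$, so $\det(L_W)$ is a genuinely nonzero polynomial, monic of multidegree $(1,\dots,1)$ in $(y_v)_{v\in W}$.

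The main step is then to argue that $\det(L_W)$ cannot factor nontrivially. Suppose $\det(L_W)=PR$ with $P,R$ nonconstant. Because $\det(L_W)$ has degree $\le 1$ in each $y_v$, for each $v\in W$ at most one of $P,R$ involves $y_v$; this partitions $W$ into $W=W_P\sqcup W_R$ according to which factor carries each $y_v$ (vertices whose $y$ appears in neither go, say, to $W_P$). Using \eqref{eq:LWexpand} with $S=W$ we see $P$ contains the monomial $\prod_{v\in W_P}y_v$ and $R$ the monomial $\prod_{v\in W_R}y_v$ (after normalizing constants). Now I take $S=\emptyset$ in \eqref{eq:LWexpand}: the $y$-free part of $\det(L_W)$ is $\det(Q_W)$, which by the matrix-tree theorem equals $\sum_{\mathbf f}\pi_{\mathbf f}$, the sum over oriented forests of $G$ rooted in $V\setminus W$ — equivalently, spanning forests of $G_W$ rooted at the "boundary", counted with the weight of edges leaving $W$ included. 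The $y$-free part of $PR$ is $(P|_{y=0})(R|_{y=0})$. But $P|_{y=0}$ and $R|_{y=0}$ cannot both be nonconstant and give a product equal to this forest polynomial unless the forest polynomial itself factors; and here is where strong connectivity of $W$ enters: I will show that $\det(Q_W)$ is an irreducible polynomial in the $x_e$, or at least that it does not factor compatibly with the partition $W=W_P\sqcup W_R$. Concretely, for $u\in W_P$ and $u'\in W_R$, pick (using strong connectivity of $G_W$) an edge $e$ from $W_P$ to $W_R$ and an edge $e'$ from $W_R$ to $W_P$ inside $G_W$; one builds explicit spanning forests of $G_W$ (rooted at $V\setminus W$) using $e$ but not $e'$, and others using $e'$ but not $e$, so the variable $x_e$ and $x_{e'}$ both appear in $\det(Q_W)$ and appear "entangled" — a product $P'R'$ with $P'$ a polynomial in variables attached to $W_P$-ish edges and $R'$ in $W_R$-ish ones cannot reproduce monomials that mix $x_e$ and $x_{e'}$ the way the forest sum does. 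Making this entanglement argument precise is the heart of the matter.

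The hard part will be this last entanglement/irreducibility argument for the forest polynomial $\det(Q_W)$: one has to organize the bookkeeping of which $x_e$-variables can appear in $P$ versus $R$ and rule out a clean split, and the cleanest route is probably to prove directly that $\det(L_W)$, viewed as a polynomial in all variables $(x_e)_{e\in E}$ together with $(y_v)_{v\in W}$, is irreducible by a single connectedness argument: set all $y_v$ equal (or specialize most variables to generic constants) to reduce to an absolutely irreducible hypersurface statement, then use that $\det(L_W)$ is linear in each $y_v$ and that the "Jacobian"/partial derivatives $\partial\det(L_W)/\partial y_v=\det(Q_{W\setminus\{v\}})$ have no common factor — again by strong connectivity of $G_W$, these minors cannot share an irreducible factor, since a prime dividing $\det(Q_{W\setminus\{v\}})$ for all $v$ would have to divide $\det(Q_W)$'s whole gradient, forcing $\det(Q_W)$ itself to be a power, contradicting that its coefficient of $\prod_{v\in W}y_v$ (namely $1$, from \eqref{eq:LWexpand} with an extra $y$ reinstated) is square-free. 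I expect the write-up to combine \eqref{eq:LWexpand}, the matrix-tree description of each coefficient $\det(Q_{W\setminus S})$, and strong connectivity to show no nonconstant polynomial divides all the coefficients of the $y$-monomials, which together with multidegree $(1,\dots,1)$ in the $y_v$ yields irreducibility.
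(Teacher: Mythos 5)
Your setup is sound and overlaps with how the paper begins: the multilinear expansion $\det(L_W)=\sum_{S\subseteq W}\bigl(\prod_{v\in S}y_v\bigr)\det(Q_{W\setminus S})$, the observation that a factorization $PR$ must split the $y_v$'s between the factors, and the reduction to the $y$-free forest polynomial $\det(Q_W)$ are all correct. But there is a genuine gap exactly where you flag ``the heart of the matter'': you never rule out a compatible factorization of the forest polynomial. The ``entanglement'' argument is only gestured at, and it is not even set up correctly, because your partition $W=W_P\sqcup W_R$ is defined by which factor carries $y_v$, whereas what is actually needed is that each factor can involve only the $x_e$'s with sources in its own part; this follows from the stronger fact that every monomial of $\det(L_W)$ contains \emph{exactly one} variable from the row-group $\{y_v\}\cup\{x_e:s(e)=v\}$ for each $v\in W$, not merely from degree $\le 1$ in $y_v$. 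Your fallback synthesis is also logically invalid: for a polynomial of multidegree $(1,\dots,1)$ in the $y_v$, coprimality of the coefficients does \emph{not} imply irreducibility --- $(y_1+a)(y_2+b)=y_1y_2+by_1+ay_2+ab$ is reducible although its coefficients $1,b,a,ab$ have no common factor; indeed, since the coefficient of $\prod_{v\in W}y_v$ in $\det(L_W)$ is $1$, the ``no common factor'' criterion is automatically satisfied and carries no information. The gradient argument (a common prime factor of all $\partial\det(L_W)/\partial y_v$ forcing $\det(Q_W)$ to be a power) is likewise unjustified.

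For comparison, here is how the paper closes the gap. From the one-variable-per-row-group property it deduces a partition $W=X\uplus X'$ such that $A$ (resp.\ $B$) involves only the $x_e$ with $s(e)\in X$ (resp.\ $s(e)\in X'$); specializing $x_e=0$ for $s(e)\in X$ (and then for $s(e)\in X'$) identifies the factors and yields $\det(L_W)=\det(L_X)\det(L_{X'})$. Then strong connectivity of $G_W$ provides spanning trees of $G_W$ rooted at a vertex of $X$ and at a vertex of $X'$; extracting from these the outgoing edges of the vertices of $X'$ and of $X$ respectively produces a $y$-free monomial of $\det(L_X)\det(L_{X'})$ made of $|W|$ edges all with source and target in $W$. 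Such a monomial is a functional graph on $W$, hence contains a cycle and is not a forest rooted in $V\setminus W$, contradicting the matrix-tree description of the $y$-free part of $\det(L_W)$ (whose monomials are exactly such forests, each with coefficient $+1$, so no cancellation can occur). This explicit cycle-producing construction is the concrete content your ``entanglement'' step is missing.
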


\begin{proof}
First we note that $\det(L_W)$ is a homogeneous polynomial, and it has degree at most one in each of the variables $x_e, e\in E, s(e)\in W, (y_v)_{v\in W}$. Moreover, by Kirchhoff's theorem, its term of 
total
degree 0 in the $y$ variables is the generating function of forests rooted in $V\setminus W$, which is nonzero since $W$ is strongly connected and proper. In particular, the polynomial is not divisible by any of the $y_v$.
By expanding the determinant $\det(L_W)$ along the row indexed by some $w\in W$, we see that  for each $w$, in each  monomial  of $\det(L_W)$ there is at most one factor $x_e$ with $s(e)=w$. It follows that, for each $w$, as a polynomial in the variables $(x_e;s(e)=w)$, 
the polynomial $\det(L_W)$ has degree 1.

Now assume that $\det(L_W)=AB$ is a nontrivial factorization into homogeneous polynomials  then, from the previous point, for each $w$ the polynomial $AB$ is a factorization of a degree one polynomial in $(x_e;s(e)=w)$. It follows that there must exist a partition of $W=X\uplus {X'}$ where $A$ is a polynomial in the $y_v$ and in the variables $x_e$ with $s(e)\in X$, while $B$ is a polynomial in the $y_v$ and in the variables $x_e$ with $s(e)\in {X'}$;
note that this partition is non trivial since $\det(L_W)$ is not divisible by any $y_v$.
Moreover every monomial of $\det(L_W)$ can be written in a unique way as a product of a monomial appearing in $A$ and a monomial appearing in $B$.
Putting all variables $x_e$ with $s(e)\in X$ to zero we see that
$$
\left.\det(L_W) \right|_{x_e=0, s(e)\in X}
=\det(L_{X'})\prod_{v\in X}y_v = A(y,0)B.
$$
The same can be done for ${X'}$ and we obtain that 
$
\det(L_X) 
\det(L_{X'})
=h(y)\det(L_W)
$
 where $h(y)=A(y,0)B(y,0)\prod_{v\in W}y_v^{-1}$ is a Laurent polynomial. By looking at the top coefficient in the $y_v$ on both sides it follows that $h =1$, hence  
$$
\det(L_W)=
\det(L_X) 
\det(L_{X'})
.$$
Since the graph $G_W$ is strongly connected there exists
a spanning tree $\mathbf{a}$ of $W$ rooted in some vertex $x\in X$; in the corresponding monomial term of $\det(L_W)$ there is a factor $x_e$ with $s(e)=x'$ for each $x'\in {X'}$, since each vertex of ${X'}$ has an outgoing edge in the tree~$\mathbf{a}$.  The corresponding monomial therefore appears  in $\det(L_{X'})$, and we note that each variable $x_e$ appearing in this monomial is such that $t(e)\in W$. The argument can be repeated for $X$ and we deduce that there exists a monomial term in  $\det(L_W)=\det(L_X)\det(L_{X'})$ which is a product of variables $x_e$ which are all such that $t(e)\in W$; this monomial does not correspond to a forest by a simple counting argument, hence a contradiction.
\end{proof}

\subsection{The case of the full minor.}  The space of functions on $\mathcal T G$ which depend only on the root
of the tree (i.e. functions $F$ such that $F(\mathbf{a})=F(\mathbf{b})$ if $p(\mathbf{a})=p(\mathbf{b})$) is  invariant by the action of $\mathcal L$ on functions, moreover the restriction of $\mathcal L$ to this subset is clearly equivalent to the action of $L$ on the functions on $V$ by the obvious map. 
 Dually the matrix $\mathcal L$ leaves invariant the space of measures $\mu$ such that $\mu(p^{-1}(v))=0$ for all $v\in V$. The action of $\mathcal L$ on the quotient of $meas(\mathcal{T}G)$ by this subspace is isomorphic to the action of $L$ on $meas(G)$. From either of these remarks, we deduce

\begin{lemma}\label{div1}
The polynomial $\det(L)$ divides $\det(\mathcal L)$.
\end{lemma}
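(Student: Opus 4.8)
The plan is to exhibit $\det(L)$ explicitly as a factor of $\det(\mathcal L)$ by decomposing the matrix $\mathcal L$ (equivalently, the vector space on which it acts) into two invariant pieces, one of which carries the action of $L$ itself. Concretely, I would work on the right side with the space $\mathrm{Fun}(\mathcal T G)$ of $\mathbb F_G$-valued functions on the vertex set $\mathcal T V$, and consider the subspace
\[
\mathcal P \;=\; \{\, F\in \mathrm{Fun}(\mathcal T G) \;:\; F(\mathbf a)=F(\mathbf b)\ \text{whenever}\ p(\mathbf a)=p(\mathbf b)\,\},
\]
i.e. functions that factor through the root projection $p\colon \mathcal T V\to V$. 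The text has already observed that $\mathcal P$ is invariant under the action of $\mathcal L$ on functions and that the restriction $\mathcal L|_{\mathcal P}$ is, via the bijection $F\mapsto (v\mapsto F(\mathbf a_v))$ with $\mathrm{Fun}(G)$, conjugate to the action of $L$. (This is essentially immediate from the definition of $\mathcal Q$: the $x_e$-weight on an edge $\mathbf a\to\mathbf b$ of $\mathcal T G$ is $x_{p(\mathbf a\to\mathbf b)}$, the diagonal potential at $\mathbf a$ is $y_{p(\mathbf a)}$, and the out-edges at $\mathbf a$ project bijectively onto the out-edges at $p(\mathbf a)$.) So $L$ and $\mathcal L|_{\mathcal P}$ have the same characteristic polynomial; in particular, $\det(L) = \det(\mathcal L|_{\mathcal P})$ up to the harmless sign ambiguity coming from choosing a basis, which I will simply absorb.

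The key remaining step is to upgrade ``$\mathcal L|_{\mathcal P}$ has determinant $\det(L)$'' to ``$\det(L)$ divides $\det(\mathcal L)$'' — that is, to produce an $\mathcal L$-invariant complement (or at least a quotient) so that $\det(\mathcal L) = \det(\mathcal L|_{\mathcal P})\cdot(\text{something polynomial})$. For this I would use the dual remark already made in the text: on the space $\mathrm{Meas}(\mathcal T G)$ of measures, $\mathcal L$ acts on the left and leaves invariant the subspace
\[
\mathcal N \;=\; \{\, \mu \;:\; \mu\bigl(p^{-1}(v)\bigr)=0\ \text{for all}\ v\in V\,\},
\]
and the action of $\mathcal L$ on the quotient $\mathrm{Meas}(\mathcal T G)/\mathcal N$ is isomorphic to the action of $L$ on $\mathrm{Meas}(G)$ via push-forward $\mu\mapsto p_*\mu$. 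Now $\mathcal N$ is precisely the annihilator of $\mathcal P$ under the natural pairing between measures and functions, and $\mathcal L$ acts on functions on the right and measures on the left compatibly with this pairing; hence, choosing a basis of $\mathrm{Meas}(\mathcal T G)$ adapted to the filtration $\mathcal N \subset \mathrm{Meas}(\mathcal T G)$, the matrix of $\mathcal L$ becomes block triangular with diagonal blocks representing $\mathcal L|_{\mathcal N}$ and $\mathcal L$ on the quotient $\cong L$. Therefore
\[
\det(\mathcal L) \;=\; \pm\,\det(\mathcal L|_{\mathcal N})\cdot\det(L),
\]
and since $\mathcal L|_{\mathcal N}$ is an endomorphism of a finite-dimensional $\mathbb F_G$-vector space its determinant is an element of $\mathbb F_G$, in fact a polynomial because $\mathcal L$ has polynomial entries and $\mathcal N$ is spanned by integer-coefficient combinations of the $\delta_{\mathbf a}$ (e.g. differences $\delta_{\mathbf a}-\delta_{\mathbf b}$ with $p(\mathbf a)=p(\mathbf b)$, together with one representative per fiber). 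This yields the divisibility.

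I do not expect a serious obstacle here; the statement is essentially a bookkeeping consequence of the two invariance observations that precede it in the text. The one point that needs a little care — and which I would make sure to spell out — is that the ``quotient $\cong L$'' identification is the honest source of the factor $\det(L)$, not merely of a polynomial of the same degree: one must check that the push-forward map $p_*\colon \mathrm{Meas}(\mathcal T G)/\mathcal N \to \mathrm{Meas}(G)$ intertwines $\mathcal L$ with $L$ on the nose (which follows because $\sum_{\mathbf a}\mu(\mathbf a)\mathcal L_{\mathbf a,\mathbf c}$, summed over the fiber $p^{-1}(w)$ of any $w$, equals $\sum_{v}(p_*\mu)(v)L_{v,w}$ — each transition $\mathbf a\to\mathbf b$ of $\mathcal T G$ over the $G$-edge $v\to w$ contributes weight $x_{vw}$, and the Eulerian/covering structure guarantees the fiberwise sums match). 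A secondary point is to confirm that $\det(\mathcal L|_{\mathcal N})$ really is a polynomial and not just a rational function, which as noted follows from choosing an integral basis of the lattice $\mathcal N\cap \mathbb Z^{\mathcal T V}$; alternatively one can avoid the issue entirely by observing that $\det(\mathcal L)/\det(L)$ is a priori a rational function in $\mathbb F_G$ that is integral over the polynomial ring $\mathbb C[x_e,y_v]$ (being a ratio of determinants with the denominator dividing, as just shown), hence a polynomial by normality of the polynomial ring.
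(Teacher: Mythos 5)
Your proposal is correct and follows essentially the same route as the paper: the paper's proof consists precisely of the two dual observations you use (invariance of the space of functions factoring through the root projection, and of the space of measures vanishing on each fiber $p^{-1}(v)$, with the quotient action isomorphic to $L$), and deduces the divisibility from either one. Your write-up merely spells out the block-triangularity and the polynomiality of the cofactor in more detail than the paper does.
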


\subsection{Boundary and erased vertices}\label{sec:boundary}
 We make some remarks on the algorithm of Section \ref{sec:algo}. Once we have applied the algorithm to a given tree $\mathbf{b}$, with output $W=\psi(\mathbf{b})$,  we can distinguish
 several subsets of vertices: 
\begin{enumerate}
\item the set $Z=\phi(\mathbf{b})$, which is the set of vertices of a subtree of $\mathbf{b}$;
\item the set $W=\psi(\mathbf{b})$, which is the set of vertices of a subtree of the previous one;
\item the set $Y=V\setminus Z$;
\item  the set of {\sl erased points} which are the vertices which have been erased when applying the algorithm. 
\item the set of {\sl boundary points,
} which are the vertices in $Y$ having
an outgoing edge with target in $Z$.
\end{enumerate} 

\begin{lemma}\label{erase}
The sets of boundary points and of erased points coincide.
\end{lemma}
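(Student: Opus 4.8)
\textbf{Plan for the proof of Lemma~\ref{erase}.}
The plan is to analyze carefully what the algorithm does to a vertex that has an outgoing edge into $Z=\phi(\mathbf{b})$, and to show that such a vertex must have been processed (and hence erased, since it is not in $Z$) before the algorithm terminates; conversely, that every erased vertex lies in $Y=V\setminus Z$ and witnesses a boundary edge. The key observation is that an edge $e$ is placed in the list $\mathbf{L}$ exactly when its target is added to $A$, and at that moment $e$ is still in $F$ (edges are removed from $F$ only when one of their endpoints is erased). So an edge with target in $Z$ is inserted into $\mathbf{L}$ at the step where that target enters $A$, unless it has already been deleted from $F$ because its source was erased earlier — but if the source was erased, the source is by definition an erased point, and since the erasing happened we already have our boundary/erased witness. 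This bookkeeping is the heart of the matter.

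First I would show $\text{boundary} \subseteq \text{erased}$. Let $w\in Y$ have an outgoing edge $e$ with $t(e)\in Z$. Since $t(e)\in Z=A_{\text{final}}$, at some iteration $t(e)$ was added to $A$ (the root is in $A$ from initialization, and that case is vacuous since $w\notin Z$ means $e\neq$ a tree edge into the root unless... — more carefully, if $t(e)=v$ the root, then $e$ is put into $\mathbf{L}$ at initialization). At the iteration when $t(e)$ enters $A$ along a tree edge, the algorithm appends to $\mathbf{L}$ all edges of $F$ with target $t(e)$. Either $e\in F$ at that moment, so $e$ gets appended to $\mathbf{L}$, and since $\mathbf{L}$ is processed to exhaustion, $e$ is eventually picked; its source $w$ is then either added to $A$ (impossible, as $w\in Y$) or erased — so $w$ is erased. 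Or $e\notin F$ at that moment, which means $e$ was previously deleted from $F$; but edges leave $F$ only when the "else" branch erases a vertex incident to them, i.e. $w=s(e)$ was already erased, or $t(e)$ was erased (impossible, $t(e)\in Z$). Either way $w$ is erased.

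Next I would show $\text{erased} \subseteq \text{boundary}$. Suppose $w$ is erased; this happens when some edge $f$ with source $w$ is picked from $\mathbf{L}$ and found not to be in the tree $\mathbf{b}$. An edge is in $\mathbf{L}$ only if it was inserted at initialization (target $=v$, the root) or appended when its target entered $A$; in both cases $t(f)\in A$ at the time of insertion, hence $t(f)\in Z$ (vertices never leave $A$). Also $w\notin Z$: once $w$ is erased it is never added to $A$, and it was not in $A$ before (a vertex is added to $A$ only via a tree edge pointing to it, after which all edges out of it are deleted from $\mathbf{L}$, so it could never later be picked as the source of an erasing edge). Thus $w\in Y$ has the outgoing edge $f$ with $t(f)\in Z$, so $w$ is a boundary point.

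The main obstacle, and the place where care is genuinely needed, is the interaction between deletions from $F$ and insertions into $\mathbf{L}$: one must rule out the scenario where a boundary edge $e$ into $Z$ is silently destroyed (removed from $F$) before its target is processed, without leaving a trace. The resolution is exactly the case analysis above — destruction of $e$ from $F$ forces an endpoint of $e$ to be erased, and the only eligible endpoint is $s(e)=w$ itself, which gives the conclusion directly. A secondary subtlety is confirming the dichotomy used repeatedly: at each iteration the picked edge either is a tree edge (source joins $A$) or is not (source is erased); and once a vertex is in $A$ it stays, once erased it stays erased, and the two are mutually exclusive because adding $w$ to $A$ is immediately followed by deleting all edges out of $w$ from $\mathbf{L}$, so $w$ can never afterward trigger the "else" branch. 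Making these monotonicity statements precise (perhaps as a preliminary sub-claim) is what makes the rest routine.
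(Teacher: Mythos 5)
Your proof is correct and follows essentially the same strategy as the paper's: for erased $\subseteq$ boundary you use that the triggering edge was enqueued only because its target had entered $A$, together with the monotonicity/exclusivity of "in $A$" versus "erased"; for boundary $\subseteq$ erased you track a boundary edge through $F$ and $\mathbf{L}$. The only (cosmetic) difference is that the paper handles the second inclusion by selecting the \emph{first} scanned target of the boundary vertex, whereas you take an arbitrary boundary edge and split on whether it is still in $F$ when its target enters $A$ — both resolve the same subtlety, and yours spells out the bookkeeping in more detail.
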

\begin{proof}
In an iteration of the algorithm, any vertex which has been added to the set $A$ has all its outgoing edges suppressed, therefore it cannot be erased in a subsequent iteration. It follows that, 
if  a vertex has been erased during the algorithm, then it does not belong to $Z$ and it is the source of some edge with
target in $Z$ therefore it is a boundary point. 
Conversely if $v$ is a boundary point let $z\in Z$ be the first vertex, among the targets of an outgoing edge of $v$, which is scanned by the algorithm, then the edge from $v$ to $z$ does not belong to the tree $\mathbf{b}$ (if it did, $v$ would be in $Z$), therefore $v$ is erased when
one applies the algorithm at $z$. 
\end{proof}

\subsection{Constructing the invariant subspaces}\label{sec:subspace}
Let $W\subset V$ be a strongly connected proper subset. In this section and the next we will construct $m(W)$ complementary
vector spaces that are invariant by $\mathcal L$ and on which $\mathcal L$ acts as the matrix $L_W$. This will be the main step towards proving~\eqref{main1}. This construction goes in two steps: we first build a space of measures that is not invariant (this section, \ref{sec:subspace}) and we then construct a quotient of this space by imposing suitable ``boundary conditions'' that make the quotient space invariant (Section~\ref{sectheta}).

For every pair $(\mathbf{a},\mathbf{f})$ formed of a spanning oriented tree $\mathbf{a}$ of $W$ and an oriented forest $\mathbf{f}$ rooted in $W$,  let us call $\mathbf{a}\times \mathbf{f}$ the oriented spanning tree of $V$, rooted in the root of $\mathbf{a}$, obtained by taking the union of the edges of $\mathbf{a}$ and $\mathbf{f}$. Let us  denote by $\mathcal T W$ the set of oriented spanning trees of $W$ and $\mathcal F W$ the set of oriented forests rooted in $W$. We thus have an injection $\mathcal  TW\times \mathcal  FW\to \mathcal T V$ and correspondingly a linear map from 
$meas(\mathcal  TW)\otimes meas(\mathcal FW )\to meas (\mathcal T V)$.
Fix some forest $\mathbf{f}$ as above and consider the matrix $\mathcal L_{(W)}$ obtained from $\mathcal L$ by keeping only the rows and columns corresponding to oriented spanning trees of $V$ of the form
$\mathbf{a}\times \mathbf{f}$ where $\mathbf{a}$ is some spanning oriented tree of $W$. It is easy to see that this matrix, considered as a matrix indexed by elements of $\mathcal T W$, does not depend on the forest $\mathbf{f}$, but only on $W$. It differs from the matrix $\mathcal L$ constructed from the graph $G_W$ by some diagonal terms corresponding to the fact that there exists edges in $E$ with source in $W$ and target in $V\setminus W$.
The matrix $\mathcal L_{(W)}$ acts on functions on 
$\mathcal TW$ 
and on measures on 
$\mathcal TW$,
and it is easy to see that for its action on measures, the space of measures on 
$\mathcal TW$
 such that $\mu(p^{-1}(w))=0$ for every vertex $w\in W$ is an invariant subspace of measures. The action of  $\mathcal L_{(W)}$ on the quotient of $meas(\mathcal{T}W)$ by this subspace is isomorphic to the action of  $L_W$ on $meas(W)$.

\subsection{Boundary conditions and proof of Theorem~\ref{thm:main}}\label{sectheta}
 The subspace of measures 
$meas(\mathcal  TW)\otimes meas(\mathcal FW )\subset meas (\mathcal T V)$ is not invariant by the action of  $\mathcal L$ on measures but we will see that by modifying it and imposing suitable "boundary conditions"  we will obtain 
an invariant subspace.
For this let us consider a vertex $w\in W$ and a tree  $\mathbf{b}$, rooted at $w$,  such that  $\psi(\mathbf{b})=W$. The tree $\mathbf{b}$ is of the form $\mathbf{a}\times \mathbf{f}$ considered above, moreover the tree $\mathbf{a}$ depends only on $W$ and $w$, since it coincides with the breadth-first search exploration tree on $W$ (similarly as in Lemma \ref{bfs}). To emphasize this fact we use the notation $\mathbf{a}=\mathbf{a}_{W,w}$. The set of trees $\mathbf{b}$ rooted at $w$ and such that $\psi(\mathbf{b})=W$ is equal to $\mathbf{a}_{W,w}\times F_{W,w}$ where $F_{W,w}$ is some set of forests rooted in $W$, with $|F_{W,w}|=m(W,w)$. As indicated by the notation, the set $F_{W,w}$ may depend on both $W$ and $w$.

Let us fix $\mathbf{f}\in F_{W,w}$ and consider the set $\frak E_{\mathbf{b}}$ of vertices erased when running the algorithm on the tree~$\mathbf{b}=\mathbf{a}_{W,v}\times\mathbf{f}$.
A vertex $v$ is  erased when it is the source of some edge $e(v)$ considered in the algorithm, which is not in $\mathbf{b}$ and which is scanned before the edge of $\mathbf{b}$ going out of~$v$.
For a subset $\frak F\subset \frak E_{\mathbf{b}}$ let $\mathbf{f}_{\frak F}$ be the graph obtained by replacing in $\mathbf{f}$, for each erased vertex $v\in \frak F$, the edge going out of $v$  by the edge $e(v)$.
\begin{lemma} For each $\frak F\subset \frak E_{\mathbf{b}}$ the graph $\mathbf{f}_{\frak F}$ is a forest rooted in $W$
\end{lemma}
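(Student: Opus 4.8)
The plan is to show that $\mathbf{f}_{\frak F}$ satisfies the two defining properties of a forest rooted in $W$: every vertex of $V\setminus W$ has outdegree exactly one, every vertex of $W$ has outdegree zero, and there is no cycle. The first two properties are immediate from the construction: $\mathbf{f}$ is a forest rooted in $W$, and for each $v\in\frak F$ we replace the unique outgoing edge of $v$ (which exists since $v$ is erased, hence $v\notin Z\supseteq W$) by the edge $e(v)$, which by definition also has source $v$; no outgoing edges are added or removed at any other vertex. So the content of the lemma is the absence of cycles.

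To rule out cycles I would use the following structural description of the edges involved. Recall from Lemma~\ref{erase} that the erased vertices are exactly the boundary vertices of $Z=\phi(\mathbf{b})$, i.e. the vertices $v\in Y=V\setminus Z$ having an outgoing edge with target in $Z$; moreover, for an erased vertex $v$, the edge $e(v)$ is precisely an edge from $v$ to the first vertex of $Z$ scanned among the targets of outgoing edges of $v$, so in particular $t(e(v))\in Z$. The key observation is then a hierarchy on $Y$: running the breadth-first exploration, each vertex of $Y$ is erased at a well-defined time, and the edges $e(v)$ always point "forward" from $Y$ into $Z$, never back into $Y$. More precisely, I would argue that if one assigns to each vertex of $Z$ its BFS-discovery time and to each erased vertex of $Y$ its erasure time, then following an edge of $\mathbf{f}_{\frak F}$ out of a vertex $v$: if $v\in\frak F$ we land in $Z$ and stay in $Z$ forever afterwards (since inside $Z$ the graph $\mathbf{f}_{\frak F}$ agrees with the tree $\mathbf{b}$ restricted to $Z\setminus$(something), which has no cycle), while if $v\in Y\setminus\frak F$ we follow the original edge of $\mathbf{f}$, which either stays in $Y$ moving strictly "closer to $Z$" in an appropriate partial order, or enters $Z$. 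Either way, once a path enters $Z$ it cannot leave, and $\mathbf{f}_{\frak F}$ restricted to $Z$ is acyclic because it is contained in $\mathbf{b}$; and a path confined to $Y$ cannot cycle because the relevant monovariant (erasure order, or distance to $Z$ along $\mathbf{f}$) strictly decreases. This gives acyclicity.

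Concretely, here is the cleanest way I would phrase the monovariant. Since $\mathbf{f}$ is a forest rooted in $W\subseteq Z$, every maximal path in $\mathbf{f}$ starting at a vertex of $Y$ eventually reaches $Z$; let $d(v)$ be the number of edges of $\mathbf{f}$ on the path from $v$ to the first vertex of $Z$ it meets, for $v\in Y$, and set $d(v)=0$ for $v\in Z$. Along an edge of $\mathbf{f}$ out of $v\in Y$, $d$ decreases by exactly one. For $v\in\frak F\subseteq\frak E_{\mathbf{b}}\subseteq Y$, the replaced edge $e(v)$ has target in $Z$, so $d$ drops from $d(v)\geq 1$ to $0$. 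Hence along every edge of $\mathbf{f}_{\frak F}$ with source in $Y$, the value of $d$ strictly decreases; and no edge of $\mathbf{f}_{\frak F}$ has source in $Z$ (such a vertex would be in $W$, contradicting outdegree zero, unless it lies in $Z\setminus W$, where $\mathbf{f}$ has an outgoing edge — here one must be a little careful, as $\mathbf{f}$ does have edges out of $Z\setminus W$, but those edges are inherited from $\mathbf{b}$ and stay inside $\mathbf{b}$, which is acyclic). So I would split a hypothetical cycle $C$ of $\mathbf{f}_{\frak F}$ according to whether it meets $Y$: if it does, it contains an edge out of some $v\in Y$, and then $d$ strictly decreases around $C$, impossible; if it does not, $C$ lies entirely in $Z$, but there $\mathbf{f}_{\frak F}$ coincides with the restriction of the tree $\mathbf{b}$ to $Z$ (the modifications only touch edges out of $Y$), which is acyclic. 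This contradiction completes the proof.

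The main obstacle I anticipate is the bookkeeping around $Z\setminus W$: the set $Z=\phi(\mathbf{b})$ is strictly larger than $W=\psi(\mathbf{b})$ in general, the forest $\mathbf{f}$ genuinely has edges with source in $Z\setminus W$, and one must check carefully that (i) these edges are never among the $e(v)$ or the replaced edges, so they are unaffected by passing to $\mathbf{f}_{\frak F}$, and (ii) the sub-structure of $\mathbf{b}$ on $Z$ — namely the BFS tree on $Z$ together with the part of $\mathbf{f}$ living over $Z\setminus W$ — is itself acyclic, which it is, being a subgraph of the tree $\mathbf{b}$. Getting the target-in-$Z$ property of $e(v)$ exactly right, via Lemma~\ref{erase} and the definition of the erasure step (``$v$ is erased when one applies the algorithm at $z$'' for the appropriate $z\in Z$), is the crux; everything else is a routine monovariant argument.
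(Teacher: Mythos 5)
Your proof is correct and takes essentially the same approach as the paper's (one-sentence) argument: each vertex of $V\setminus W$ keeps outdegree one after the replacement, and since every new edge $e(v)$ points into $Z$ while the old edges flow toward $W$, every vertex still has a directed path to $W$, which forbids cycles. Your monovariant $d$ is simply a careful formalization of that last observation, so no further comment is needed.
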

\begin{proof}
It suffices to observe that each vertex of $V\setminus W$ has outdegree $1$ and that, by construction, from any such vertex there is directed path going to $W$.
\end{proof}

For $\mathbf{f}\in F_{W,w}$, let 
$\nu_{\mathbf{f}}$ be the measure on $\mathcal{F}W$ defined by
\begin{align}\label{eq:defnu}
\nu_{\mathbf{f}}=\sum_{\frak F\subset \frak E_{\mathbf{b}}}(-1)^{|\frak F|}\delta_{f_{\frak F}},
\end{align}
with $\mathbf{b}=\mathbf{a}_{W,v}\times\mathbf{f}$.

\begin{lemma}The measures $  \nu_{\mathbf{f}}$ for $\mathbf{f}$ in $F_{W,v}$  are linearly independent.
\end{lemma}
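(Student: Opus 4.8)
The plan is to show linear independence by exhibiting, for each $\mathbf{f}\in F_{W,v}$, a forest on which the measure $\nu_{\mathbf{f}}$ has nonzero ``leading'' mass while all other $\nu_{\mathbf{f}'}$ with $\mathbf{f}'\in F_{W,v}$ vanish, after ordering the forests appropriately. The natural choice is to use $\mathbf{f}$ itself: I would argue that the coefficient of $\delta_{\mathbf{f}}$ in $\nu_{\mathbf{f}}$ is $1$ (it is the term $\frak F=\emptyset$), and that the only way $\delta_{\mathbf{f}}$ can occur in $\nu_{\mathbf{f}'}$ for some other $\mathbf{f}'\in F_{W,v}$ is if $\mathbf{f}$ is obtained from $\mathbf{f}'$ by replacing, along some subset $\frak F$ of erased vertices of $\mathbf{b}'=\mathbf{a}_{W,v}\times\mathbf{f}'$, the outgoing edge of each $v\in\frak F$ by $e(v)$. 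The key point will be that this replacement operation, viewed as acting on forests rooted in $W$, admits a natural partial order so that $\mathbf{f}_{\frak F}$ is always ``below'' $\mathbf{f}$ in that order unless $\frak F=\emptyset$, and hence no forest in $F_{W,v}$ can be strictly below another forest in $F_{W,v}$.

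To make this precise, first I would set up a suitable statistic or partial order on $\mathcal F W$. A candidate: for a forest $\mathbf{g}$ rooted in $W$, run the exploration algorithm on $\mathbf{a}_{W,v}\times\mathbf{g}$ and record the set $\phi(\mathbf{a}_{W,v}\times\mathbf{g})$, or more robustly compare forests via the set of vertices reachable ``inward'' toward $W$ along tree edges. The replacement $\mathbf{f}\mapsto\mathbf{f}_{\frak F}$ redirects the outgoing edge of an erased vertex $v$ to $e(v)$, and by the definition of ``erased'', $e(v)$ was scanned \emph{before} the true tree-edge out of $v$; intuitively this means $\mathbf{f}_{\frak F}$ has $v$ attaching ``closer'' to the already-explored region than in $\mathbf{f}$. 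I expect that the right invariant is: the number of vertices of $V\setminus W$, or a weighted version thereof, and that each nonempty replacement strictly decreases it. Then $\nu_{\mathbf{f}}=\delta_{\mathbf{f}}+(\text{lower-order terms})$, and since the $\mathbf{f}\in F_{W,v}$ all have the \emph{maximal} value of this statistic among trees $\mathbf{b}$ with $\psi(\mathbf{b})=W$ (they are precisely the forests $\mathbf{f}$ for which the algorithm on $\mathbf{a}_{W,v}\times\mathbf{f}$ does not erase any vertex \emph{of} $W$, i.e. for which $\psi$ returns all of $W$), no $\delta_{\mathbf{f}}$ with $\mathbf{f}\in F_{W,v}$ appears in $\nu_{\mathbf{f}'}$ for $\mathbf{f}'\neq\mathbf{f}$ in $F_{W,v}$. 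Triangularity of the matrix $(\text{coeff of }\delta_{\mathbf{f}'}\text{ in }\nu_{\mathbf{f}})_{\mathbf{f},\mathbf{f}'\in F_{W,v}}$ with respect to this order, with $1$'s on the diagonal, then gives linear independence.

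The cleanest route, and the one I would actually write, is to bypass the global order and instead use a direct reconstruction argument: claim that from $\mathbf{f}_{\frak F}$ (together with the knowledge of $W$ and $v$) one can recover both $\mathbf{f}$ and $\frak F$ whenever $\mathbf{f}\in F_{W,v}$. This is because the vertices in $\frak F$ are exactly the erased vertices of $\mathbf{f}_{\frak F}$'s own exploration whose redirected edge $e(v)$ points into the explored region in a way that violates the ``no erased vertex in $W$'' property; undoing each redirection restores $\mathbf{f}$. If the map $(\mathbf{f},\frak F)\mapsto \mathbf{f}_{\frak F}$ is injective on $\bigsqcup_{\mathbf{f}\in F_{W,v}}2^{\frak E_{\mathbf{b}}}$, then the supports of the $\nu_{\mathbf{f}}$ are disjoint except that $\mathbf{f}_{\emptyset}=\mathbf{f}$ is not hit by any other $\nu_{\mathbf{f}'}$, which again forces independence (the coefficient of $\delta_{\mathbf{f}}$ in $\sum_{\mathbf{f}}c_{\mathbf{f}}\nu_{\mathbf{f}}$ equals $c_{\mathbf{f}}$).

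\textbf{Main obstacle.} The delicate part is proving that no $\mathbf{f}\in F_{W,v}$ equals $\mathbf{f}'_{\frak F}$ for some other $\mathbf{f}'\in F_{W,v}$ and nonempty $\frak F$ — equivalently, that the forests $\mathbf{f}\in F_{W,v}$ sit at the ``top'' and cannot be obtained from one another by the erasing-edge substitution. This requires a careful analysis of how the breadth-first exploration interacts with the substitution $e(v')\leftrightarrow(\text{tree edge out of }v')$: one must verify that performing even one such substitution changes the output of $\psi$ (or at least changes $\phi$), so that if $\psi(\mathbf{a}_{W,v}\times\mathbf{f}')=W$ then $\psi(\mathbf{a}_{W,v}\times\mathbf{f}'_{\frak F})\subsetneq W$ for $\frak F\neq\emptyset$, while simultaneously $\psi(\mathbf{a}_{W,v}\times\mathbf{f})=W$ by hypothesis — a contradiction. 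Pinning down the right monotone quantity preserved/decreased by the substitution, and checking it is compatible with the FIFO order in which the algorithm scans edges, is where the real work lies; everything else is the standard ``unitriangular implies independent'' bookkeeping.
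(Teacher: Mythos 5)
Your overall strategy --- expand each $\nu_{\mathbf{f}}$ in the basis $\{\delta_{\mathbf{g}},\ \mathbf{g}\in\mathcal F W\}$, identify $\delta_{\mathbf{f}}$ as a ``leading term'' with coefficient $1$ for a suitable order, and conclude by triangularity --- is exactly the paper's strategy. But the one step you explicitly defer (``pinning down the right monotone quantity'') is the entire content of the proof, and the candidates you float do not work. The number of vertices of $V\setminus W$ is constant; and neither $|\phi|$ nor the set $\phi$ itself behaves monotonically under $\mathbf{f}\mapsto\mathbf{f}_{\frak F}$: redirecting the outgoing edge of an erased vertex $v$ onto $e(v)$ does make $v$ explored, but once $v$ is explored its incoming edges are appended to the queue, and a vertex $u$ that belonged to $\phi(\mathbf{a}_{W,w}\times\mathbf{f})$ can now become erased (its non-tree edge $u\to v$, previously discarded when $v$ was erased, may now be scanned before the tree edge out of $u$). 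So $\phi$ can both gain and lose vertices, and a cardinality or set-inclusion argument fails. Your alternative ``reconstruction'' route asks for more than is needed (and possibly more than is true): injectivity of $(\mathbf{f},\frak F)\mapsto\mathbf{f}_{\frak F}$ on the whole disjoint union would mean the supports of the $\nu_{\mathbf{f}}$ are pairwise disjoint, which the paper neither proves nor uses --- its argument tolerates collisions among the non-leading terms.

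What the paper actually does is finer. To each tree $\mathbf{a}$ rooted at $w$ it attaches the sequence $(l_0,\dots,l_k)$, where $v_0,\dots,v_k$ are the vertices of $\phi(\mathbf{a})$ listed in their order of discovery and $l_i$ is the number of incoming edges of $v_i$ from $\phi(\mathbf{a})$, and it compares trees by the lexicographic order on these sequences. The key observation is local in time: the explorations of $\mathbf{a}_{W,w}\times\mathbf{f}$ and of $\mathbf{a}_{W,w}\times\mathbf{f}_{\frak F}$ coincide until the first edge $e(v)$ with $v\in\frak F$ is scanned, at which moment the modified tree discovers the extra vertex $v$; this forces the first discrepancy between the two sequences to favor $\mathbf{f}_{\frak F}$, so $\mathbf{a}_{W,w}\times\mathbf{f}_{\frak F}$ is strictly larger in the gradation whenever $\frak F\neq\emptyset$. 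Hence, ordering rows and columns by any total order extending this gradation, each row $\nu_{\mathbf{f}}$ has its leftmost nonzero entry equal to $1$ in column $\mathbf{f}$, distinct rows have distinct leading columns, and independence follows. Until you exhibit such a strictly increasing statistic and verify its compatibility with the FIFO scanning order, your text is a plan for a proof rather than a proof.
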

\begin{proof}
First, construct a gradation on the set of spanning trees of $G$ rooted at $w$ as follows. If $\mathbf{a}$ is such a tree, let $v_0=w,v_1, \ldots v_k$ be the list of elements of $\phi(\mathbf{a})$ (the set of non-erased vertices) in the order they are discovered by the algorithm running on $\mathbf{a}$. We let $l_0,\ldots,l_k$ be the number of incoming edges of $v_0,\ldots,v_k$ from the set $\phi(\mathbf{a})$. 
This construction associates to any tree $\mathbf{a}$ rooted at $w$ a finite sequence $l_0,\ldots,l_k$ of integers. We equip the set of all sequences with the lexicographic order, which induces a gradation on the set of trees rooted at $w$.

Now, if $\mathbf{f}\in F_{W,w}$ and $\frak{F} \neq \emptyset$, then the tree $\mathbf{a}_{w,W}\times \mathbf{f}_{\frak{F}}$ is strictly higher in the gradation than $\mathbf{a}_{w,W}\times \mathbf{f}$. Indeed the origin of the first edge $e(v)$ for $v\in \mathfrak{F}$ that is considered by the algorithm belongs to the set $\phi(a_{w,W}\times \mathbf{f}_{\frak{F}})$ but not to $\phi(a_{w,W}\times \mathbf{f})$, which shows that at the first index where the degree sequences differ, the one corresponding to $\mathbf{a}_{w,W}\times \mathbf{f}_{\frak{F}}$ takes a larger value -- hence it is larger for the lexicographic order.

This shows that the transformation \eqref{eq:defnu} expressing the measures $\{\nu_{\mathbf{g}}, \mathbf{g} \in F_{W,w}\}$ in the basis $\{\delta_\mathbf{f}, \mathbf{f} \in \mathcal{F}W\}$ is given by a matrix of full rank: indeed, provided we order rows and columns by any total ordering of $\mathcal{F}W$ that extends the gradation defined by $\mathbf{f}<\mathbf{g}$ if $(\mathbf{a}_{W,w}\times \mathbf{f}) < (\mathbf{a}_{W,w}\times \mathbf{g})$, we obtain a strict upper staircase matrix. 
\end{proof}

It follows from the last lemma that the collection of measures 
$$\delta_{\mathbf{t}}\otimes \nu_{\mathbf{f}}$$ where $\mathbf{t}$ runs over all rooted spanning trees of $W$ and $\mathbf{f}$ over all elements of $F_{W,w}$
is a  linearly independent family of measures on $\mathcal T G$.

Now fix as above a forest $\mathbf{f}\in F_{W.w}$ and let $\mathbf{b}=a_{W.w}\times \mathbf{f}$. Recall that $\psi(\mathbf{b})=W$, and call $B \subset V\setminus W$ the set of boundary points relative to the tree $\mathbf{b}$, as defined in Section~\ref{sec:boundary}.
Let $H$ be the subgraph of $\mathcal T G$ where we have erased all edges having for  source a tree rooted in a vertex of $B$. Let $K$ be the subset of vertices of $H$ which can be reached by a path in $H$ starting from a tree of the form $\mathbf{t}\times \mathbf{f}_{\frak F}$, for some spanning tree $\mathbf{t}$ of $W$ and some $\frak{F}\subset \frak{E}_\mathbf{b}$. Let $J\subset K$ be the subset of  trees whose root is not an element of $B$. Note that $B$, $H$, $K$ and $J$ all depend on the choice of $\mathbf{f}$ (or $\mathbf{b}$) even though we do not indicate it in the notation.

\begin{lemma}\label{dimension}
 Let $\mathcal E_\mathbf{f}$ be the space of measures spanned by $\delta_\mathbf{t}\otimes \nu_{\mathbf{f}}\mathcal L^n$ for all spanning trees $\mathbf{t}$ of $W$ and all $ n\geq 0$, then every measure in this $\mathcal E_\mathbf{f}$ is supported on the set $J$.
\end{lemma}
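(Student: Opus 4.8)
The goal is to show that every measure of the form $\delta_\mathbf{t}\otimes\nu_\mathbf{f}\mathcal{L}^n$ is supported on $J$, the set of vertices of $K$ whose root is not in the boundary set $B$. Since $\mathcal{E}_\mathbf{f}$ is spanned by these measures, it suffices to prove the statement for all generators. I would argue by induction on $n\geq 0$, the base case $n=0$ and the inductive step being handled slightly differently.

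\textbf{Base case.} For $n=0$, the measure $\delta_\mathbf{t}\otimes\nu_\mathbf{f}$ is, by the definition \eqref{eq:defnu} of $\nu_\mathbf{f}$, a signed combination of the Dirac masses $\delta_{\mathbf{t}\times\mathbf{f}_{\frak F}}$ for $\frak F\subset\frak E_\mathbf{b}$. Each such tree $\mathbf{t}\times\mathbf{f}_{\frak F}$ is rooted at $w\in W$, hence certainly not rooted in $B\subset V\setminus W$; and each is reachable from itself by the trivial (empty) path in $H$, hence lies in $K$. So $\mathbf{t}\times\mathbf{f}_{\frak F}\in J$, and the base case holds.

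\textbf{Inductive step.} Assume $\mu:=\delta_\mathbf{t}\otimes\nu_\mathbf{f}\mathcal{L}^n$ is supported on $J$; I must show $\mu\mathcal{L}$ is supported on $J$. Since $\mathcal{L}=\mathcal{Q}+\mathcal{Y}$ and $\mathcal{Y}$ is diagonal, it is enough to treat $\mu\mathcal{Q}$. Now $\mathcal{Q}$ acts on measures by pushing mass from a tree $\mathbf{a}$ along outgoing edges of $\mathbf{a}$ in $\mathcal{T}G$ (plus a diagonal term, which stays on the support). So $\mu\mathcal{Q}$ is supported on the union of the support of $\mu$ together with all targets in $\mathcal{T}G$ of outgoing edges from vertices of $J$. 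Thus I need: (i) if $\mathbf{a}\in J$ and $\mathbf{a}\to\mathbf{a}'$ is an edge of $\mathcal{T}G$, then $\mathbf{a}'\in K$; and (ii) the contributions to mass at trees rooted in $B$ cancel. For (i): since $\mathbf{a}\in J$ its root is not in $B$, so the edge $\mathbf{a}\to\mathbf{a}'$ is an edge of $H$ (recall $H$ is $\mathcal{T}G$ with edges out of trees rooted in $B$ deleted); and since $\mathbf{a}\in K$ is reachable in $H$ from some $\mathbf{t}'\times\mathbf{f}_{\frak F}$, so is $\mathbf{a}'$, whence $\mathbf{a}'\in K$. This shows $\mu\mathcal{Q}$ is supported on $K$. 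It remains to show no mass survives on trees in $K$ rooted in $B$, i.e. on $K\setminus J$; equivalently, for each such tree $\mathbf{c}$ rooted at some $b\in B$, the coefficient $(\mu\mathcal{Q})(\mathbf{c})=\sum_{\mathbf{a}:\mathbf{a}\to\mathbf{c}}\mu(\mathbf{a})\mathcal{Q}_{\mathbf{a}\mathbf{c}}$ vanishes. This is where the signed structure of $\nu_\mathbf{f}$ must be used: the vertex $b\in B$ is an erased (= boundary, by Lemma~\ref{erase}) vertex, carrying a designated non-tree edge $e(b)$, and the pairs $\frak F$ versus $\frak F\triangle\{b\}$ in \eqref{eq:defnu} contribute with opposite signs; one checks that the set of trees $\mathbf{a}$ with $\mathbf{a}\to\mathbf{c}$ and $\mu(\mathbf{a})\neq 0$ splits into such cancelling pairs, because arriving at a tree rooted at $b$ forces, in the relevant predecessor, a choice between the $\mathbf{f}$-edge and the $e(b)$-edge at $b$. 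The two predecessors differ exactly by toggling $b\in\frak F$, and the transition weights $\mathcal{Q}_{\mathbf{a}\mathbf{c}}$ agree, so the signed sum is zero. Combining, $\mu\mathcal{Q}$ — and hence $\mu\mathcal{L}$ — is supported on $J$, closing the induction.

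\textbf{Main obstacle.} The delicate point is step (ii): verifying that the $(-1)^{|\frak F|}$ signs in \eqref{eq:defnu} organize the predecessors of every $B$-rooted tree into cancelling involutive pairs, and that the $\mathcal{Q}$-weights match within each pair. This requires a careful bookkeeping of which edge of $\mathcal{T}G$ (equivalently, which simple-cycle move in $G$, via the covering/cycle structure of Proposition~\ref{thm:main}'s preceding discussion) lands at a tree rooted at $b\in B$, and identifying the toggling of $b$ in $\frak F$ as the source of the pairing. Everything else — the pushing-mass description of the $\mathcal{Q}$-action, the $H$-reachability bookkeeping for $K$, and the linear independence already established — is routine.
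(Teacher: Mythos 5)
Your overall architecture (reduce to generators by linearity; show mass only propagates within $K$; show mass at trees rooted in $B$ cancels) is the right one, and your part (i) is essentially the paper's second step, just run forwards instead of backwards. But there is a genuine gap in your step (ii), and it sits exactly at the point you flag as the ``main obstacle''. Your induction hypothesis records only that $\mu=\delta_\mathbf{t}\otimes\nu_\mathbf{f}\mathcal L^n$ is \emph{supported} on $J$; to make the cancellation $(\mu\mathcal Q)(\mathbf{c})=\sum_{\mathbf{a}}\mu(\mathbf{a})\mathcal Q_{\mathbf{a}\mathbf{c}}=0$ work for $\mathbf{c}$ rooted at $b\in B$, you need the \emph{values} $\mu(\mathbf{a})$ on the paired predecessors to be opposite. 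That is a quantitative antisymmetry property of $\mu$ at time $n$ which the support statement does not give you, and which you do not propagate through the induction. Worse, after $n$ steps the trees in the support of $\mu$ are no longer labelled by subsets $\mathfrak{F}$: a path contributing to $\mu(\mathbf{a})$ may have visited $b$ at intermediate times and replaced the edge out of $b$ by something other than the $\mathbf{f}$-edge or $e(b)$, so the sentence ``the two predecessors differ exactly by toggling $b\in\mathfrak{F}$'' conflates time-$n$ trees with time-$0$ trees. The phrase ``one checks that\dots'' is precisely where an unproved (and, as stated, not even well-formulated) claim is being used. If you insist on a one-step induction you would have to strengthen the hypothesis to something like ``$\mu$ is antisymmetric under the involution exchanging the $\mathbf{f}$-edge and $e(b)$ out of each $b\in B$'', and then verify that $\mathcal L$ preserves this stronger property --- which is not easier than the original problem.

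The paper sidesteps this entirely by \emph{not} inducting for the boundary-cancellation part. It expands
$\delta_\mathbf{t}\otimes\nu_\mathbf{f}\mathcal L^n(\mathbf{c})=\sum_{\mathfrak{F}\subset\mathfrak{E}}(-1)^{|\mathfrak{F}|}\sum_\pi\mathcal L_\pi$
as a signed sum over pairs (subset $\mathfrak{F}$, path $\pi$ of length $n$ in $\mathcal TG$ from $\mathbf{t}\times\mathbf{f}_{\mathfrak{F}}$ to $\mathbf{c}$), and builds a global sign-reversing involution on these pairs using the unique path-lifting property of the covering $p:\mathcal TG\to G$: the projection $\tau$ of $\pi$ lifts uniquely to a path $\pi'$ starting at $\mathbf{t}\times\mathbf{f}_{\mathfrak{F}'}$, where $\mathfrak{F}'$ is obtained from $\mathfrak{F}$ by adding or removing $b$. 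Since the two starting trees differ only by the edge out of $b$ and $\tau$ terminates at $b$, that edge is deleted at the last step, so $\pi'$ also ends at $\mathbf{c}$; and $\mathcal L_\pi=L_\tau=\mathcal L_{\pi'}$ because the weight depends only on the projection. The two contributions carry opposite signs and cancel. This pairs whole length-$n$ trajectories rather than one-step predecessors, which is exactly what lets one avoid tracking any structure of the intermediate measure. Only after this boundary vanishing is established for all $n$ does the paper run the easy induction (your part (i)) to confine the support to $K$. I would encourage you to rewrite step (ii) along these lines.
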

\begin{proof}
It is enough to prove that for all $\mathbf{t}$ and $n\geq 0$,  the measure $\delta_\mathbf{t}\otimes \nu_{\mathbf{f}}\mathcal L^n$ has support in $J$, since this property is preserved under taking linear combinations.
 Let us compute
$\delta_\mathbf{t}\otimes \nu_{\mathbf{f}}\mathcal L^n(\mathbf{c})$ for 
 a tree $\mathbf{c}$ rooted a some boundary point $v\in B$. Recall that boundary points and erased points coincide by Lemma \ref{erase}.
One has 
\begin{equation}\label{J}
\delta_\mathbf{t}\otimes \nu_{\mathbf{f}}\mathcal L^n(\mathbf{c})=\sum_{\frak F\subset \frak E}(-1)^{|\frak F|}\sum_\pi \mathcal L_\pi
\end{equation}
where the sum $\sum_\pi \mathcal L_\pi$ is over all paths $\pi$ of length $n$ in 
$\mathcal{T}G$
 starting at 
 $\mathbf{t}\times \mathbf{f}_{\frak F}$ and ending in $\mathbf{c}$
 and $\mathcal L_\pi$ is the product of $\mathcal L_e$ over all edges $e$ traversed by $\pi$. Let $\pi$ be such a path and $\tau$ its projection on $G$, then the quantity $\mathcal L_\pi$ is equal to $L_\tau$. Assume that 
$v\notin \frak F$ then the path $\tau$ can be lifted to a path $\pi'$ starting at 
$\mathbf{t}\times f_{\frak F\cup\{v\}}$.  The only difference between $f_{\frak F}$ and $f_{\frak F\cup\{v\}}$ is the edge coming out of $v$. Since the path $\tau$ ends in $v$, the edge starting from  $v$ is deleted in the end tree of $\pi'$, therefore the end point of $\pi'$ is  again $\mathbf{c}$.
 It follows that the contributions of 
$\mathcal L_\pi$ and $\mathcal L_{\pi'}$ to the sum cancel. If 
$v\in \frak F$ we consider the path $\pi'$ started at 
$\mathbf{t}\times \mathbf{f}_{\frak F\setminus\{v\}}$, again the two contributions cancel. It follows that any contribution to the right hand side of (\ref{J}) comes with another which cancels it, therefore the quantity
$\delta_\mathbf{t}\otimes \nu_{\mathbf{f}}\mathcal L^n(\mathbf{c})$ vanishes for all $n$ and all 
trees  $\mathbf{c}$ rooted in some boundary point.

 Let now $\mathbf{c}$ be a tree which does not belong to the set $K$.
 We prove that
\begin{equation}\label{K}
\delta_\mathbf{t}\otimes \nu_{\mathbf{f}}\mathcal L^n(\mathbf{c})=0
\end{equation}
by induction on $n$. Clearly this is true if $n=0$ and
$$
\delta_\mathbf{t}\otimes \nu_{\mathbf{f}}\mathcal L^{n+1}(\mathbf{c})=\sum_{\mathbf{d}}
\delta_\mathbf{t}\otimes \nu_{\mathbf{f}}\mathcal L^{n}(\mathbf{d})\mathcal L_{\mathbf{d}\mathbf{c}}.$$
Since  $\mathbf{c}\notin K$, if $(\mathcal L)_{\mathbf{d}\mathbf{c}}\ne 0$ then either 

i) $\mathbf{d}$ is rooted in a boundary point, or 

ii) $\mathbf{d}\notin K$. 

In the first  case $\delta_\mathbf{t}\otimes \nu_{\mathbf{f}}\mathcal L^{n}(\mathbf{d})=0$ by the first part of the proof.

In case ii) $\delta_\mathbf{t}\otimes \nu_{\mathbf{f}}\mathcal L^{n}(\mathbf{d})=0$ follows from the induction hypothesis.

Equation (\ref{K}) follows.

\end{proof}

Now we let $\mathcal{E}$ be the span of the spaces $\mathcal{E}_\mathbf{f}$ for all forests $\mathbf{f}\in F_{W,w}$. Equivalently $\mathcal E$ is the space  of measures spanned by $[\delta_\mathbf{t}\otimes \nu_{\mathbf{f}}]\mathcal L^n$ for all $\mathbf{t}$, $ n$ and $\mathbf{f}$.
By construction the space $\mathcal E$ is invariant by the action of $\mathcal L$ on measures.

\begin{lemma}
The subspace $\mathcal F$ of $\mathcal E$ which consists of measures supported by trees with root not in $W$ is an invariant subspace.
\end{lemma}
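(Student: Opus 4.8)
The plan is to show that $\mathcal{F}$, the space of measures in $\mathcal{E}$ supported on trees whose root lies outside $W$, is stable under the right action of $\mathcal{L}$. The key structural input comes from Lemma~\ref{dimension}: every measure in $\mathcal{E}$ is supported on the set $J$, which consists of trees in $K$ whose root is \emph{not} a boundary point. So $\mathcal{F}$ is really the span of the $\delta_\mathbf{c}$ for $\mathbf{c}\in J$ with $root(\mathbf{c})\notin W$, intersected with $\mathcal{E}$, and the whole of $\mathcal{E}$ decomposes (as a vector space) into the part supported on trees rooted in $W$ and the part supported on trees rooted in $V\setminus W$, the latter being exactly $J\setminus (\text{trees rooted in }W)$.

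First I would record that, since $\mathcal{E}$ is $\mathcal{L}$-invariant, it suffices to check that if $\mu\in\mathcal{F}$ then $\mu\mathcal{L}$ still vanishes on every tree rooted in $W$. Writing $\mu\mathcal{L}(\mathbf{c})=\sum_{\mathbf{d}}\mu(\mathbf{d})\mathcal{L}_{\mathbf{d}\mathbf{c}}$ for $\mathbf{c}$ rooted at some $w'\in W$, the only contributions come from $\mathbf{d}$ with $\mathcal{L}_{\mathbf{d}\mathbf{c}}\neq 0$, i.e. $\mathbf{d}=\mathbf{c}$ or $\mathbf{d}\to\mathbf{c}$ is an edge of $\mathcal{T}G$; the diagonal term $\mathcal{L}_{\mathbf{c}\mathbf{c}}$ is multiplied by $\mu(\mathbf{c})=0$. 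For an edge $\mathbf{d}\to\mathbf{c}$, the root of $\mathbf{d}$ is a vertex $v$ with an edge $e$, $s(e)=v$, $t(e)=w'\in W$. The crucial observation is that such a $v$ must lie outside $W$ only in the "wrong" way: more precisely, I claim that whenever $\mathbf{d}\in J$ and $\mathbf{d}\to\mathbf{c}$ with $root(\mathbf{c})\in W$, then either $root(\mathbf{d})\in W$ as well (so $\mu(\mathbf{d})=0$ because $\mu\in\mathcal{F}$), or $root(\mathbf{d})$ is a boundary point of $\mathbf{b}$ (so $\mathbf{d}\notin J$, hence $\mu(\mathbf{d})=0$ because $\mu$ is supported on $J$ by Lemma~\ref{dimension}). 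Either way every term vanishes and $\mu\mathcal{L}(\mathbf{c})=0$.

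So the heart of the matter — and the step I expect to be the main obstacle — is the combinatorial claim: if $\mathbf{d}\in K$ has root $v\notin W$ with $v$ not a boundary point, and there is an edge $e$ from $v$ to some $w'\in W$, then adding $e$ to $\mathbf{d}$ and deleting the outgoing edge of $w'$ produces a tree $\mathbf{c}$ that is \emph{not} rooted in $W$ in a way compatible with membership in $K$; equivalently, no tree of $K$ rooted in $W$ has an in-neighbour in $K$ rooted outside $W$ at a non-boundary vertex. The argument should run through the description of $K$ as the set of trees reachable in $H$ (the graph with all edges out of $B$-rooted trees deleted) from the trees $\mathbf{t}\times\mathbf{f}_{\frak F}$: I would trace how the root moves along a path in $H$ and use that the roots encountered stay inside $W\cup B$ — the root leaves $W$ only by stepping to a boundary point $v\in B$, and from a $B$-rooted tree there are no outgoing edges in $H$, so one cannot come back into $W$ from a non-boundary exterior vertex while staying in $K$. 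This uses Lemma~\ref{erase} (boundary points $=$ erased points) and the covering/cycle structure of $\mathcal{T}G$ to control which vertex plays the role of the root along such a path.

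Concretely I would organize the proof as: (1) reduce to showing $\mu\mathcal{L}$ vanishes on trees rooted in $W$, using $\mathcal{L}$-invariance of $\mathcal{E}$ and the diagonal term killing itself; (2) enumerate the in-neighbours $\mathbf{d}$ of such a $\mathbf{c}$ and split according to whether $root(\mathbf{d})\in W$ or $root(\mathbf{d})\notin W$; (3) in the first case invoke $\mu\in\mathcal{F}$; (4) in the second case show $\mathbf{d}\notin J$ — either $\mathbf{d}\notin K$, using that to reach $\mathbf{c}$ (rooted in $W$) from $\mathbf{d}$ one crosses an edge, and $K$ is "closed under such crossings" only via boundary vertices, or $root(\mathbf{d})\in B$ — and then invoke that measures in $\mathcal{E}$ are supported on $J$ by Lemma~\ref{dimension}. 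The only genuinely delicate point is (4), i.e. pinning down that $K$ cannot be re-entered into its $W$-rooted part from outside except through $B$, which I would extract from the explicit reachability definition of $K$ inside $H$.
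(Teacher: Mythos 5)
Your proposal follows the same route as the paper's proof: reduce to showing that $\mu\mathcal{L}$ vanishes on $W$-rooted trees, use Lemma~\ref{dimension} to restrict attention to in-neighbours $\mathbf{d}$ lying in $J$, and rule out those rooted outside $W$ via the reachability definition of $K$ inside $H$ together with the deletion of edges out of $B$-rooted trees. However, one intermediate claim is false as stated and leaves a small gap in your step (4). Along a path in $H$ starting from a $W$-rooted tree, the roots encountered do \emph{not} stay inside $W\cup B$: the root may wander through vertices of $Y\setminus B$ (where $Y=V\setminus\phi(\mathbf{b})$), for instance along a long excursion outside $\phi(\mathbf{b})$ of which only the last vertex before re-entry is a boundary point. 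This does not hurt the case $v=root(\mathbf{d})\in Y$, since such a $v$ with an edge into $W\subset\phi(\mathbf{b})$ is a boundary point by definition, hence $\mathbf{d}\notin J$. But your dichotomy misses the case $v\in\phi(\mathbf{b})\setminus W$: such a $v$ is \emph{not} a boundary point, so you must show directly that $\mathbf{d}\notin K$. The missing ingredient is the maximality of $W$ as the strongly connected component of the root in $G_{\phi(\mathbf{b})}$: a root-trajectory of a path in $H$ that starts in $W$ and stays inside $\phi(\mathbf{b})$ never leaves $W$ (if it reached $v\in\phi(\mathbf{b})\setminus W$ with an edge $v\to w'\in W$, then $v$ would belong to the strongly connected component of the root in $G_{\phi(\mathbf{b})}$, i.e.\ to $W$, a contradiction), while a trajectory that leaves $\phi(\mathbf{b})$ can only re-enter it through a boundary point, where $H$ has no outgoing edge. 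With this supplement your argument is complete; to be fair, the paper's own proof is equally terse on exactly this point.
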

\begin{proof}
It is enough to prove that for each $\mathbf{f}\in F_{W,w}$ the subspace of $\mathcal{E}_\mathbf{f}$ which consists of measures supported by trees with root not in $W$ is an invariant subspace. 
This is clear from the last lemma, since in the graph $H$ we have suppressed edges coming out from vertices of $B$ (boundary vertices), hence it is not possible for a path to come back in $W$ after having left it.\end{proof}

\begin{lemma}\label{lemma:quotient}
The action of $\mathcal L$ on the quotient space $\mathcal E/\mathcal F$ carries $m(W,w)$ copies of $\mathcal L_{(W)}$. 
\end{lemma}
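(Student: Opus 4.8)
The plan is to exhibit an explicit isomorphism between $\mathcal{E}/\mathcal{F}$ and a direct sum of $m(W,w)$ copies of $meas(\mathcal{T}W)$ quotiented by the null-mass-over-fibers subspace, intertwining the action of $\mathcal{L}$ with that of $\mathcal{L}_{(W)}$. Recall that by Definition-Lemma~\ref{mW} and the discussion preceding Lemma~\ref{dimension}, the index set $F_{W,w}$ has cardinality $m(W,w)$; the $m(W,w)$ copies will be indexed by $\mathbf{f}\in F_{W,w}$. For each such $\mathbf{f}$, the subspace $\mathcal{E}_\mathbf{f}\subset\mathcal{E}$ is $\mathcal{L}$-invariant by construction, so $\mathcal{E}=\sum_{\mathbf{f}\in F_{W,w}}\mathcal{E}_\mathbf{f}$ is a sum of invariant subspaces, and likewise $\mathcal{F}=\sum_{\mathbf{f}}\mathcal{F}_\mathbf{f}$ where $\mathcal{F}_\mathbf{f}=\mathcal{F}\cap\mathcal{E}_\mathbf{f}$ is the subspace of $\mathcal{E}_\mathbf{f}$ supported on trees with root outside $W$ (invariant by the preceding lemma).

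First I would analyze a single summand $\mathcal{E}_\mathbf{f}/\mathcal{F}_\mathbf{f}$ and identify it with $meas(\mathcal{T}W)$ modulo null-fiber measures, carrying the action of $\mathcal{L}_{(W)}$. The point is that $\mathcal{E}_\mathbf{f}$ is spanned by the $\delta_\mathbf{t}\otimes\nu_\mathbf{f}\mathcal{L}^n$, whose supports lie in the set $J$ of trees in $K$ with root not a boundary point (Lemma~\ref{dimension}). Consider the linear map $r_\mathbf{f}\colon \mathcal{E}_\mathbf{f}\to meas(\mathcal{T}W)$ that sends a measure $\mu$ to the measure on $\mathcal{T}W$ assigning to a spanning tree $\mathbf{t}$ of $W$ the total mass $\mu$ puts on the fiber $\{\mathbf{t}\times\mathbf{g}: \mathbf{g}\in F_{W,w}\text{-related forests}\}$ — more precisely, restrict attention to trees whose edges inside $W$ form $\mathbf{t}$ and whose root is in $W$. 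The key claims are: (i) the kernel of $r_\mathbf{f}$ is exactly $\mathcal{F}_\mathbf{f}$; (ii) $r_\mathbf{f}$ intertwines $\mathcal{L}$ with $\mathcal{L}_{(W)}$, which follows because once a path has entered $W$ it cannot leave $W$ within the support of $\mathcal{E}_\mathbf{f}$ (the graph $H$ has no edges out of boundary trees), so the transitions between root-in-$W$ trees in $\mathcal{T}G$ restricted to the relevant set are governed precisely by the matrix $\mathcal{L}_{(W)}$ as defined in Section~\ref{sec:subspace}; (iii) $r_\mathbf{f}$ is surjective onto the relevant invariant quotient, since starting from $\delta_\mathbf{t}\otimes\nu_\mathbf{f}$ and applying powers of $\mathcal{L}$ one generates, modulo null-fiber measures, all of $meas(\mathcal{T}W)$ — here one uses that $\nu_\mathbf{f}$ has total mass $1$ on $\mathcal{F}W$ (the $\frak F=\emptyset$ term), so $\delta_\mathbf{t}\otimes\nu_\mathbf{f}$ projects to $\delta_\mathbf{t}$ modulo null-fiber measures, and these $\delta_\mathbf{t}$ already span $meas(\mathcal{T}W)$. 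Combining, $\mathcal{E}_\mathbf{f}/\mathcal{F}_\mathbf{f}\cong meas(\mathcal{T}W)/(\text{null fibers})$ as $\mathcal{L}$-modules, and the latter is isomorphic to the $L_W$-action, i.e. to one copy of $\mathcal{L}_{(W)}$ in the sense of Section~\ref{sec:subspace}.

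Next I would assemble the pieces. The measures $\delta_\mathbf{t}\otimes\nu_\mathbf{f}$ over all $\mathbf{t}$ and all $\mathbf{f}\in F_{W,w}$ are linearly independent (the lemma before Lemma~\ref{dimension}), and moreover for distinct $\mathbf{f}$ the invariant subspaces $\mathcal{E}_\mathbf{f}$ meet each other only inside $\mathcal{F}$: this is because the projection $r$ to $meas(\mathcal{T}W)$ combined with the additional data of which fiber-forest is "closest" to $\mathbf{f}$ in the gradation used in that lemma distinguishes the leading term of each generator, so that modulo $\mathcal{F}$ the images of the $\mathcal{E}_\mathbf{f}$ are in direct sum. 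Hence $\mathcal{E}/\mathcal{F}=\bigoplus_{\mathbf{f}\in F_{W,w}}\mathcal{E}_\mathbf{f}/\mathcal{F}_\mathbf{f}$, giving $m(W,w)=|F_{W,w}|$ copies of $\mathcal{L}_{(W)}$, as claimed.

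The main obstacle I anticipate is step (ii)--(iii) above, namely verifying carefully that $r_\mathbf{f}$ is well-defined, $\mathcal{L}$-equivariant, and surjective with kernel exactly $\mathcal{F}_\mathbf{f}$: one must check that no mass from powers $\mathcal{L}^n$ "leaks" to trees outside $J$ in a way that would spoil the fiber-counting (this is exactly the content of Lemma~\ref{dimension}, so it is available), and that the restricted transition structure on root-in-$W$ trees genuinely reproduces $\mathcal{L}_{(W)}$ rather than some perturbation — this requires re-examining the tree-graph edge rule of Section~\ref{sec:wlap} and the definition of $\mathcal{L}_{(W)}$ to see that the extra diagonal terms (from edges leaving $W$) are precisely accounted for. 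The direct-sum-ness across different $\mathbf{f}$, relying on the staircase structure from the gradation, will also need to be spelled out but should be routine given the earlier linear-independence lemma.
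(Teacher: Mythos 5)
There is a genuine gap, and it sits at the heart of your construction. Your map $r_\mathbf{f}$ sends a measure to its total mass on each fiber over $\mathcal{T}W$, and both your surjectivity claim (iii) and your kernel claim (i) rest on the assertion that $\nu_\mathbf{f}$ has total mass $1$ (``the $\mathfrak{F}=\emptyset$ term''). But the total mass of $\nu_{\mathbf{f}}=\sum_{\mathfrak F\subset \mathfrak E_{\mathbf{b}}}(-1)^{|\mathfrak F|}\delta_{\mathbf{f}_{\mathfrak F}}$ is the sum of \emph{all} the coefficients, namely $(1-1)^{|\mathfrak E_{\mathbf{b}}|}$, which is $0$ whenever the erased set $\mathfrak E_{\mathbf{b}}$ is nonempty --- the generic situation for a proper $W$. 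Since the forests $\mathbf{f}_{\mathfrak F}$ differ only outside $W$, all the trees $\mathbf{t}\times\mathbf{f}_{\mathfrak F}$ lie in the same fiber over $\mathbf{t}$, so $r_\mathbf{f}(\delta_\mathbf{t}\otimes\nu_\mathbf{f})=0$ for every generator and $r_\mathbf{f}$ vanishes identically on $\mathcal{E}_\mathbf{f}$. This zero total mass is not an accident: it is precisely the cancellation mechanism that makes Lemma~\ref{dimension} work, so no fiber-summing map can detect the copies. A second, independent problem is dimensional: the lemma asserts copies of $\mathcal{L}_{(W)}$, a matrix of size $|\mathcal{T}W|$, whereas your target $meas(\mathcal{T}W)$ modulo null-fiber measures has dimension $|W|$ and carries $L_W$; in fact $\mathcal{E}/\mathcal{F}$ has dimension $m(W,w)\,|\mathcal{T}W|$, not $m(W,w)\,|W|$, so the claimed isomorphism cannot hold.

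The paper's proof uses no projection at all: it checks directly on generators that $[\delta_\mathbf{b}\otimes\nu_\mathbf{f}]\mathcal L=[\delta_\mathbf{b}\mathcal L_{(W)}]\otimes \nu_\mathbf{f}+\chi$ with $\chi\in\mathcal F$, because an outgoing edge of $\mathcal{T}G$ at a tree rooted in $W$ either performs a move internal to $W$ (leaving the forest part $\mathbf{f}_{\mathfrak F}$ untouched, with weight given by $\mathcal L_{(W)}$) or lands on a tree rooted outside $W$, hence contributes to $\mathcal F$. Iterating shows that $\mathcal{E}/\mathcal{F}$ is spanned by the classes of the $\delta_\mathbf{b}\otimes\nu_\mathbf{f}$, on which $\mathcal L$ acts as $\mathcal L_{(W)}$ in the $\mathbf{b}$-index, and the linear independence of the $\nu_\mathbf{f}$ --- together with the observation that these measures are supported on trees rooted in $W$, so no nonzero combination lies in $\mathcal F$ --- yields exactly $m(W,w)$ copies. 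If you wish to keep your architecture, the map to use is not fiber mass but the leading coefficient with respect to the gradation from the linear-independence lemma; verifying its equivariance, however, reduces to the very computation above.
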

\begin{proof}
Indeed for each forest $\mathbf{f}$ 
in $F_{W,w}$
and any $\mathbf{b}$ spanning rooted tree of $W$, the measure  $\delta_\mathbf{b}\otimes\nu_\mathbf{f}$  satisfies 
$[\delta_\mathbf{b}\otimes\nu_\mathbf{f}]\mathcal L=[\delta_\mathbf{b}\mathcal L_{(W)}]\otimes \nu_\mathbf{f}+\chi$ where $\chi\in\mathcal F$. 
Moreover the space $span(\nu_{\mathbf{f}})$ has dimension $m(W,w)$ by lemma \ref{dimension}. The lemma follows.
\end{proof}

We can now finish the proof of the main results.
\begin{proof}[Proof of Definition-Lemma~\ref{mW} and~Theorem~\ref{thm:main}] From Lemma~\ref{lemma:quotient}, it follows that $\det(\mathcal L)$ is divisible by $$\det(L_W)^{m(W,w)}$$ for any strongly connected $W$ and $w\in W$. In particular we can take $m(W,w)$ to be maximal among all $w$ in $W$. This implies, since the different $\det(L_W)$ are prime polynomials (see~Lemmas~\ref{W_irred}) that 
\begin{equation}\label{R}
\det(\mathcal L)
\end{equation}
 is divisible by 
\begin{equation}\label{prod}\det(L)\times\prod_{W\mbox{ s.c.}}\det(L_W)^{\max_{w\in W}(m(W,w))}
\end{equation}
Now, the degree of (\ref{R}) is $|\mathcal T V|$ while that of 
(\ref{prod}) is  $\sum_W |W|\max_{w\in W}(m(W,w))$, therefore
$$|\mathcal T V|\geq \sum_W |W|\max_{w\in W}(m(W,w)).$$
By definition of $m(W,w)$ we have:
$$|\mathcal T V|=\sum_W\sum_{w\in W}m(W,w).$$
It follows that  we have equality
for all $w\in W$:
$$m(W,w)=\max_{w\in W}(m(W,w)).$$ 
This proves that $m(W,w)$ does not depend on $w\in W$ and justifies the notation $m(W)$. This also proves that $m(W)$ is the multiplicity of the prime factor $\det(L_W)$ in $\det(\mathcal L)$. This quantity does not depend on the order chosen on $V$, thus justifying Definition-Lemma~\ref{mW}.

We have thus proved that the two sides of \eqref{main1} are scalar multiples of each other. The proportionality constant is easily seen to be $1$ by looking at the top degree coefficient in the variables $y$.
\end{proof}

\section{The case of multiple edges}\label{multiple}
Although Theorem~\ref{thm:main} only covers the case of \emph{simple} directed graphs, it is easy to use it to address the case of multiple edges.  
Indeed there is a well-known trick which produces a directed graph with no multiple edges, starting from an arbitrary directed graph, which consists in adding a vertex in the middle of each edge of the original graph. These new vertices have one incoming and one outgoing edge, obtained by splitting the original edge.
 This produces a new graph $\tilde G=(\tilde V,\tilde E)$ with $|\tilde V|=|V|+|E|$ and
$|\tilde E|=2|E|$. Given a vertex $v\in V$ there is a natural bijection between  spanning trees of $G$ and of $\tilde G$ rooted at $v$. For a vertex $\tilde v$ of the new graph sitting on an edge $e$ with $s(e)=v$ of $G$, there is a natural bijection with the spanning trees rooted at $v$. Thus the graph $\mathcal T\tilde G$ is obtained from $\mathcal TG$ by adding vertices in the middle of the edges.
It is now an easy task to transfer results on $\tilde G$ to results on $G$.
We leave the details to the interested reader (the examples of the next section may serve as a guideline for this).

Note that we do not need to take care of loops, that are irrelevant to the study of spanning trees.

\section{Examples and applications}
\label{sec:examples}

In this section we illustrate our result on a few simple examples.

\subsection{The cycle graph}
\label{sec:ring}
This example was treated in \cite{B}, let us see how to recover it via our main result. Let $G=(V,E)$ be the \emph{cycle graph} of size $n$, with vertex set $V=[1..n]$ and a directed edge from $i$ to $j$ if $j=i\pm 1 \mod n$. Thus $G$ has $n$ vertices and $2n$ directed edges. 
The graph $G$ has $n^2$ spanning trees: a spanning tree $\mathbf{a}$ is characterized by its root vertex $r\in [1..n]$ and by the unique $i\in [1..n]$ such that $\{i,i+1\} \mod n$ are the  two vertices of degree $1$ in the tree. 

We note that for any subset of vertices $W\subset V$ of cardinality $n-1$, one has $m(W)=1$. To see this, recall that $m(W)=m(W,w)$ for any $w \in W$ and choose for $w$ a neighbour of the unique vertex $u$ not in $W$: then it is clear that the only spanning tree $\mathbf{a}$ such that $\psi(\mathbf{a})=W$ is the one rooted at $w$ in which $u$ and $w$ have degree $1$.
It is then easy to see, either directly or by considering the degree of \eqref{main1}, that these are the only proper subsets $W\subsetneq V$ such that $m(W)\neq 0$.

Applying Theorem~\ref{thm:mainSpanning}, we obtain that $\Phi_G$ is the product of all symmetric minors of $Q$ of size $n-1$, which was Theorem~2 in \cite{B}.

\subsection{The complete graph (spanning trees of the graph of all Cayley trees)}
\label{sec:complete}
If $G=K_n$ is the complete graph on $n\geq 1$ vertices, then $\mathcal{T}G$ is the set of all rooted Cayley trees of size $n$, thus $\mathcal{T}G$ has $n^{n-1}$ vertices by Cayley's formula~\eqref{eq:cayley}.
If $\mathbf{a}$ is a Cayley tree rooted at $r\in [1..n]$, applying the exploration algorithm to $\mathbf{a}$ has the following effect: at the first step, all neighbours of $r$ in $\mathbf{a}$ are explored and added to $A$, and all other vertices of $V\setminus \{r\}$ are erased. It follows that for any $W\subset [1..n]$ and $w\in W$,  the multiplicity $m(W,w)$ is equal to the number of Cayley trees rooted at $w$ in which the root has 1-neighbourhood $W\setminus \{w\}$. Those trees are in bijection with spanning forests of $[1..n]\setminus\{w\}$ rooted at $W\setminus\{w\}$. We obtain, using a classical formula for the number of labeled forests of size $n-1$ rooted at $k-1$ fixed roots:
$$
m(W)=m(w,W)=
(k-1) (n-1)^{n-k-1}, \mbox{ where }k=|W|. 
$$

This formula for the multiplicity appeared as a conjecture by the second author in~\cite{B}. It is however easily seen to be equivalent to an earlier result of Athanasiadis~\cite[Corollary 3.2]{An}, which also refers to an earlier conjecture of Propp (we were not aware of the reference~\cite{An} at the time~\cite{B} was written).
By applying Theorem~\ref{thm:mainSpanning} we obtain that the number of spanning trees of the graph $\mathcal{T}K_n$ is equal to:
$$
n^{n-2} \prod_{k=1}^{n-1} \left((n-k)n^{k-1}\right)^{(k-1) (n-1)^{n-k-1}{n \choose k} } \;.
$$
It would be interesting to give a direct combinatorial proof of this formula.

\subsection{Bouquets, and the hypercube.}
\label{sec:bouquets}
 Fix $k\geq 1$ and integers $n_1,n_2,\dots, n_k \geq 1$. Consider the \emph{bouquet graph} $B$ with vertex set 
$$V=\{0,1,\dots,k\}\uplus\{v_{i}^j, 1\leq i \leq k, 1\leq j\leq n_i\}$$
and a directed edge between each $v_{i}^j$ and $0$, between each vertex $i$ and each $v_{i}^j$ for $1\leq j \leq n_i$ and between $0$ and each vertex $i$ in $[1..k]$. See the following picture:
\begin{center}
\includegraphics[width=0.4\linewidth]{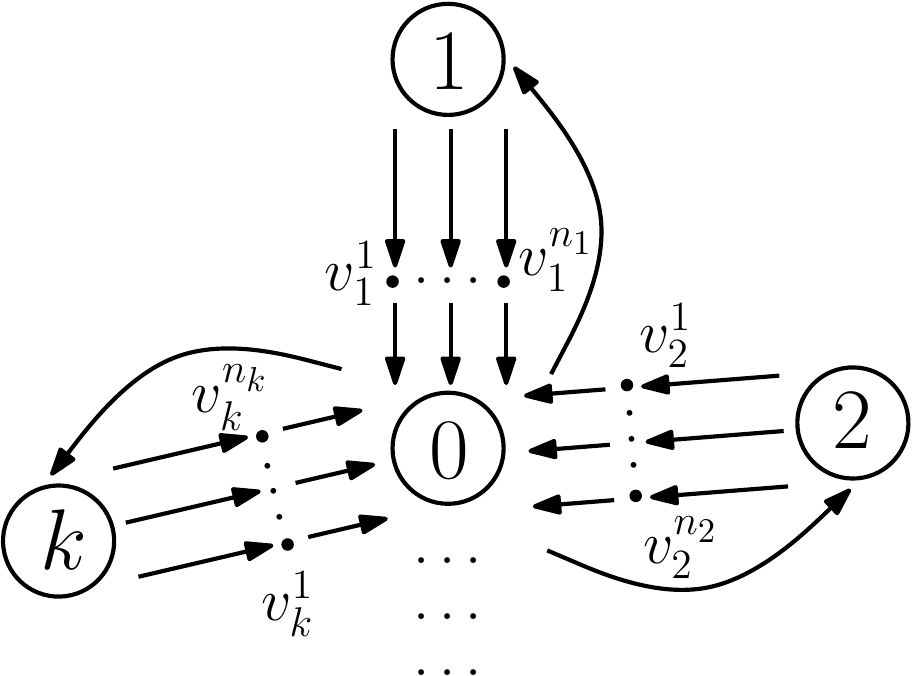}
\end{center}
For $1\leq i \leq k$, $1\leq j \leq n_i$,
we assign the weight $x_i^j$ to the edge entering the vertex~$v_i^j$, the weight $s_i$ to the edge going from $0$ to the vertex $i$,  and we assign the weight $1$ to all other edges.
A spanning tree of $B$ rooted at $0$ is naturally parametrised by the index in $[1..n_i]$ of the edge outgoing from each vertex $i$ in $[1..k]$. We let $\mathbf{a}_{\underline{m}}$ be the spanning tree rooted at $0$ naturally parametrized by $\underline{m} \in [1..n_1]\times[1..n_2]\times \dots \times [1..n_k]$.
For each $\underline{m}$, the tree $\mathbf{a}_{\underline{m}}$ has $k$ outgoing edges in $\mathcal{T}B$, to trees that we note $b_{\underline{m}'}^i$ for $i\in [1..k]$, where $b_{\underline{m}}^i$ is rooted at the vertex $i$ and $\underline{m}'$ is the projection of $\underline{m}$ to $[1..n_1]\times[1..n_2]\times \dots \times \widehat{[1..n_i]} \times \dots \times [1..n_k]$ ($i$-th set in the product omitted). Each tree $b_{\underline{m}'}^i$ has an outgoing path of length $2$ going to each tree $\mathbf{a}_{\underline{m}}$ such that $\underline{m}$ projects to $\underline{m}'$.  
For example if $k=1$, then $\mathcal{T}B$ is the following ``star graph'':
\begin{center}
\includegraphics[width=0.35\linewidth]{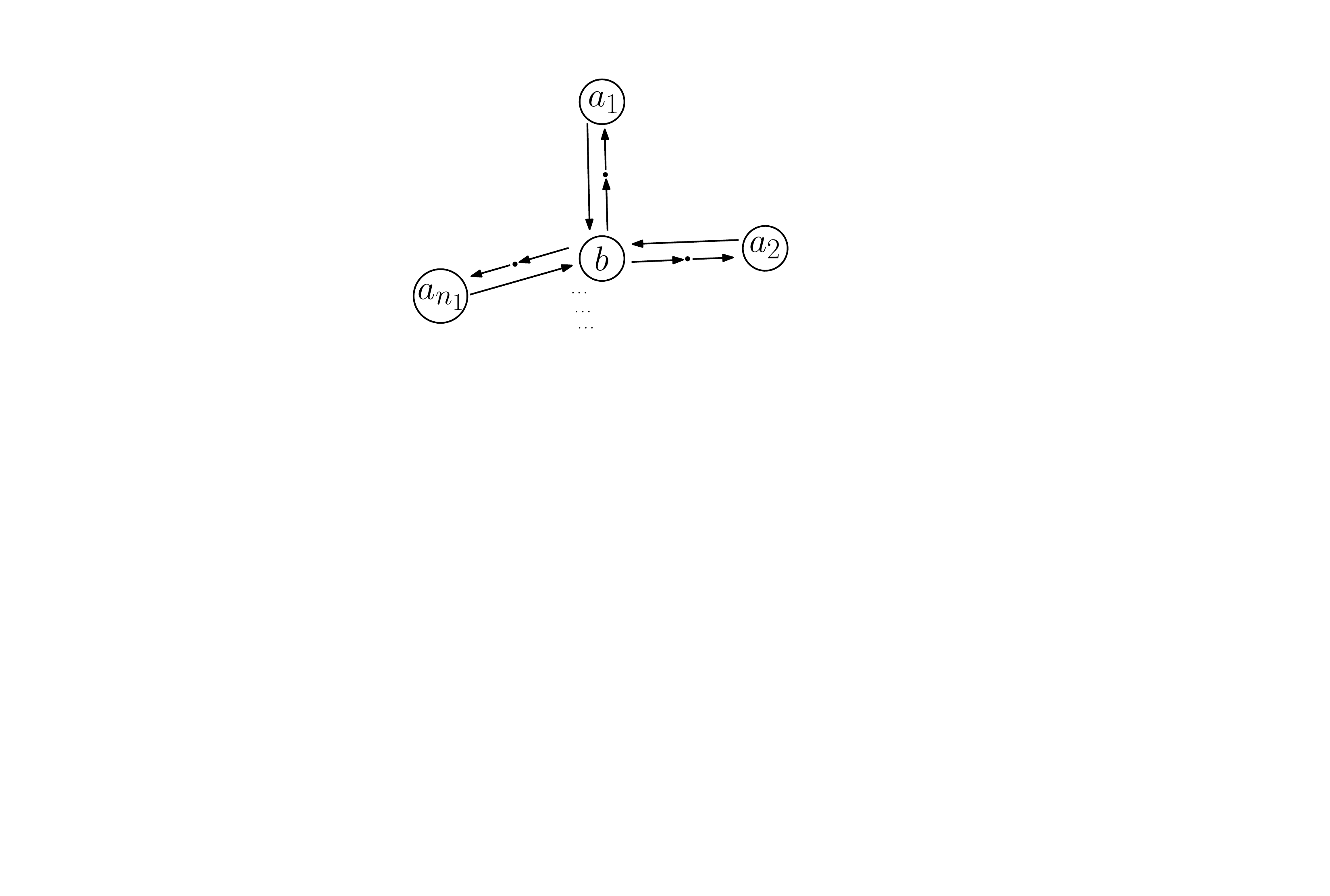}
\end{center}
For $k\geq 1$, $\mathcal{T}B$ can be interpreted as a ``partial product'' of such star graphs of parameters $n_1,n_2,\dots, n_k$, more precisely it is the subgraph of the product of these graphs induced by the subset of vertices that are such that at most one of their coordinates is a vertex which is not ``of type a''. In particular, if $n_1=n_2=\dots=n_k=2$, the graph $\mathcal{T}B$ is isomorphic to the hypercube $\{0,1\}^k$, in which three vertices are inserted in each edge, and the edge is duplicated into six directed edges as in Figure~\ref{fig:hypercube}. The mapping between $\mathcal{T}B$ and the hypercube sends the tree $\mathbf{a}_{\underline{m}}$ to the point $\underline{m}$, while the tree $\mathbf{b}_{\underline{m}'}^i$ is interpreted as the vertex lying in the middle of the edge of the hypercube defined by the vector $\underline{m}'$ (this edge points in the $i$-th axial direction).

\begin{figure}[h!!!!]
\includegraphics[width=0.4\linewidth]{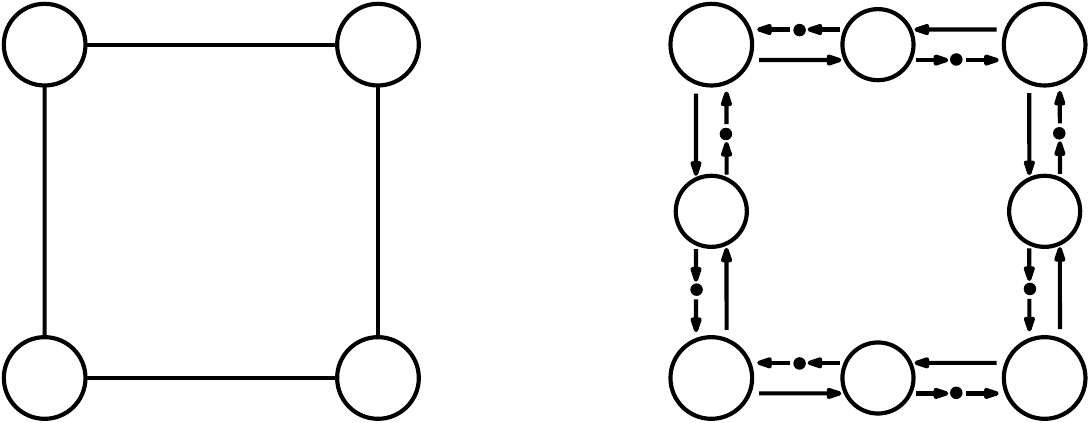}
\caption{Left: The hypercube $\{0,1\}^2$; Right: the graph $\mathcal{T}B$ for $k=2$ and $n_1=n_2=2$.}\label{fig:hypercube}
\end{figure}

Let us now apply Theorem~\ref{thm:main} to this example.  For each $I\subset [1..k]$, let $W_I$ be the strongly connected subset of $B$ consisting of $0$ and all vertices in the $i$-th petal of the bouquet for some $i \in I$. It is easy to see that these sets are the only ones with nonzero multiplicity. By basic counting, it is immediate to see that $m(W_I)=m(W_I,0) = \prod_{i\not\in I} (n_i-1)$, since a spanning tree $\mathbf{a}$ of $B$ rooted at $0$ is such that $\psi(\mathbf{a})=W_I$ if and only if the edge outgoing from the vertex $i$ is (resp. is not) the one with smallest outgoing vertex for each $i \in I$ (resp. $i\not\in I$). 
Moreover it follows from the interpretation in terms of rooted forests (Kirchoff's theorem) that for $i\in [1..k]$ one has
\begin{align}\label{eq:detQWI}
\det Q_{W_I}= \left(\sum_{i\in[1..k]\setminus I}s_i\right) \times \prod_{i\in I} \sum_{j=1}^{n_i}x_i^j.
\end{align}
From Theorem~\ref{thm:mainSpanning} one thus obtains the value of the polynomial~$\Phi_B$:
$$
\Phi_B=
\prod_{I\subsetneq [1..k]} \left(\sum_{i\in[1..k]\setminus I}s_i\right) \prod_{i\in I}\left(\sum_{j=1}^{n_i} x_i^j\right)^{\prod_{i\not\in I} (n_i-1)}.
$$ 
\noindent Equation\eqref{eq:polbiane} then implies that for any $\underline{m} \in [1..n_1]\times[1..n_2]\times \dots\times[1..n_k]$ the generating polynomial of spanning trees of $\mathcal{T}B$ rooted at $\mathbf{a}_{\underline{m}}$ is given by:
\begin{align}\label{eq:Zhyp}
Z=\left(\prod_i x_i^{m_i} \right)\Phi_B.
\end{align}
Let us now examine more precisely the case $n_1=n_2=\dots=n_k=2$ and the link with the hypercube. Let $Z_{\underline{m}}\equiv Z_{\underline{m}}(y_i^0,y_i^1, t_i, 1\leq i \leq k)$ be the generating polynomial of spanning trees of the hypercube $\{0,1\}^k$ rooted at $\underline{m}$, where $y_i^j$ marks the number of edges in the tree mutating the $i$-th coordinate to the value $j$, and $t_i$ marks the number of edges of $\{0,1\}^k$ that are parallel to the $i$-th axis and are \emph{not} present in the tree. Then it is easy to see combinatorially (see Figure~\ref{fig:hypercube} again) that we have:
\begin{align}\label{eq:Zhyp2}
Z = Z_{\underline{m}}(s_ix_i^1; s_ix_i^2; x_i^1+x_i^2).
\end{align}
Therefore the value of the generating polynomial $T_{\underline{m}}(y_i^0,y_i^1)=Z(y_i^0, y_i^1, \mathbf{1})$ can be recovered via the (invertible) change of variables $x_i^1+x_i^2=1$, $s_i x_i^1=y_i^0$, and $s_i x_i^2=y_i^1$, \textit{i.e.} by substituting $x_i^{1} \leftarrow \frac{y_i^{0}}{y_i^0+y_i^1}$,  $x_i^{2}\leftarrow \frac{y_i^{1}}{y_i^0+y_i^1}$, and $s_i \leftarrow y_i^0+y_1^1$ in~\eqref{eq:Zhyp2}. We finally obtain the generating polynomial of spanning trees of the hypercube rooted at $\underline{m}$:
\begin{align}
T_{\underline{m}}(y_i^0,y_i^1)&=
\prod_{i=1}^k \frac{y_i^{m_i}}{y_i^0+y_i^1}
\prod_{I\subsetneq [1..k]} \left(\sum_{i\in[1..k]\setminus I}y_i^0+y_i^1\right) \nonumber\\
&=\prod_{i=1}^k y_i^{m_i}
\prod_{J\subset [1..k]\atop |J|\geq 2} \left(\sum_{i\in J}y_i^0+y_i^1\right),
\label{eq:spanningH}
\end{align}
in agreement with \cite[Eq~(13)]{Bernardi} (see also~\cite[Thm. 3]{MR}).

We note that a more refined enumeration can be obtained. First, let us now assign the weight $w$ (instead of $1$) to all the edges leaving the vertices $v_i^j$, and let us replace the weights $x_i^j$ by $w x_i^j$. Using Kirchoff's theorem and a careful enumeration of spanning forests of $B$, one can generalize~\eqref{eq:detQWI} and prove that for $I\subsetneq [1..k]$ the determinant $\det (zI-Q_{W_I})$ is equal to:
\begin{align*}
&\left(z+\sum_{i\not\in I} s_i\right) \prod_{i\in I}\left(z+\sum_{j=1}^{n_i} w x_i^j\right)(w+z)^{n_i}\\
&+\sum_{i_0\in I} s_{i_0} \left( z \sum_{j=1}^{n_{i_0}} wx_{i_0}^j(w+z)^{n_{i_0}-1}+z(w+z)^{n_{i_0}}\right)
\prod_{i\in I\setminus\{i_0\}}\left(z+\sum_{j=1}^{n_{i}}wx_{i}^j\right)
(w+z)^{n_i}.
\end{align*}
This enables to apply Theorem~\ref{thm:main} and obtain the full generating polynomial of forests of the graph $\mathcal{T}B$. By extracting the top degree coefficient in $w$ in the obtained formula, we obtain the generating function of spanning forests of $\mathcal{T}B$ in which roots can only be vertices ``of type a''. In the case $n_1=n_2=\dots=n_k=2$, recalling that $m(W_I)= 1$ for all $I$, we obtain for this quantity the formula:
$$
\prod_I
\left(z+\sum_{i\not\in I} s_i\right)
\prod_{i\in I} \sum_{j=1}^2 x_i^j.
$$  
Now, the generating polynomial of directed forests on $\mathcal{T}B$ that have only roots of ``type a'', and of spanning forests of the hypercube $\{0,1\}^k$ are related combinatorially by the same combinatorial change of variables as above, namely $x_i^1+x_i^2=1$, $s_i x_i^1=y_i^0$, and $s_i x_i^2=y_i^1$, that implies in particular that $s_i=y_i^0+y_i^1$. We thus obtain:
\begin{cor}[{\cite[Eq~(3)]{Bernardi}}]\label{cor:hypercube}
The generating function of spanning oriented forests of the hypercube $\{0,1\}^k$, with a weight $z$ per root and a weight $y_i^j$ for each edge mutating the $i$-th coordinate to the value $j$ is given by:
$$
\prod_{J\subset [1..k]} \Big(z+ \sum_{i\in J} ( y_i^0 + y_i^1) \Big).
$$
\end{cor}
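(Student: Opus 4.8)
The plan is to derive Corollary~\ref{cor:hypercube} from the refined forest-enumeration computation on $\mathcal{T}B$ carried out just above its statement, specializing to $n_1=\dots=n_k=2$ and then performing the combinatorial change of variables. First I would recall that, by extracting the top-degree coefficient in $w$ from the generating polynomial of forests of $\mathcal{T}B$ (obtained by plugging the displayed formula for $\det(zI-Q_{W_I})$ into Theorem~\ref{thm:main}), we arrive at the stated expression
\begin{equation*}
\prod_{I\subseteq[1..k]}\Big(z+\sum_{i\notin I}s_i\Big)\prod_{i\in I}\sum_{j=1}^{2}x_i^j,
\end{equation*}
which enumerates directed forests of $\mathcal{T}B$ whose roots are all of ``type a''; here I use that $m(W_I)=1$ for every $I$ when all $n_i=2$. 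The factor indexed by $I$ comes from $\det(zI-Q_{W_I})$ after the top-$w$ extraction, and one must check that no other $W$ contributes (only the $W_I$ have nonzero multiplicity, as noted earlier).

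Next I would invoke the bijective correspondence between directed forests of $\mathcal{T}B$ with only ``type a'' roots and spanning oriented forests of the hypercube $\{0,1\}^k$, visible from Figure~\ref{fig:hypercube}: a ``type b'' vertex $\mathbf{b}^i_{\underline{m}'}$ sits in the middle of a hypercube edge in the $i$-th direction, and in a type-a-rooted forest of $\mathcal{T}B$ each such vertex is routed either ``forward'' or ``backward'' along that edge, which is exactly the data of keeping or orienting that hypercube edge one of two ways. Under this correspondence, tracking weights, one gets the change of variables $x_i^1+x_i^2=1$, $s_ix_i^1=y_i^0$, $s_ix_i^2=y_i^1$, hence $s_i=y_i^0+y_i^1$, and $z$ matches the root weight on both sides. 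Substituting $\sum_{j=1}^2 x_i^j=1$ and $\sum_{i\notin I}s_i=\sum_{i\notin I}(y_i^0+y_i^1)$ collapses the above product to $\prod_{I\subseteq[1..k]}\big(z+\sum_{i\notin I}(y_i^0+y_i^1)\big)$, and reindexing by the complement $J=[1..k]\setminus I$ yields exactly $\prod_{J\subseteq[1..k]}\big(z+\sum_{i\in J}(y_i^0+y_i^1)\big)$, as claimed.

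I expect the main obstacle to be the verification that the change of variables is genuinely weight-preserving under the $\mathcal{T}B$-to-hypercube correspondence — in particular, checking that a ``type b'' vertex's two possible local configurations in a type-a-rooted forest of $\mathcal{T}B$ carry weights that, after the substitution, reproduce the hypercube edge-weights $y_i^0$ and $y_i^1$, and that unrooted/``deleted'' hypercube edges are matched correctly. This amounts to a careful but routine bookkeeping of edge weights along the length-$2$ paths of $\mathcal{T}B$ through each type-b vertex; the earlier discussion of the $k=1$ ``star graph'' and of Equations~\eqref{eq:Zhyp2}--\eqref{eq:spanningH} essentially contains the template, so the argument for forests is a mild generalization of the spanning-tree case already treated. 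Everything else (the top-$w$ extraction, the specialization $m(W_I)=1$, the reindexing $I\leftrightarrow J$) is purely formal.
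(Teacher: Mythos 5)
Your proposal is correct and follows essentially the same route as the paper: apply Theorem~\ref{thm:main} with the $w$-refined weights, extract the top-degree coefficient in $w$ to isolate the type-a-rooted forests of $\mathcal{T}B$ (using $m(W_I)=1$), and then perform the combinatorial change of variables $x_i^1+x_i^2=1$, $s_ix_i^j=y_i^{j-1}$ followed by the complementation $J=[1..k]\setminus I$. The weight bookkeeping through the type-b vertices that you flag as the main obstacle is exactly the point the paper settles earlier via Equation~\eqref{eq:Zhyp2}, so nothing further is needed.
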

We conclude this section with a final comment. Of course, our proof of~\eqref{eq:spanningH} or Corollary~\ref{cor:hypercube} via Theorem~\ref{thm:mainSpanning} is more complicated than a direct enumeration  using Kirchoff's theorem and an elementary identification of the eigenspaces. However, it sheds a new light on these formulas by placing them in the general context of tree graphs. Moreover, this places the problem of finding a combinatorial proof of these results and of our main theorem under the same roof. An indication of the difficulty of this problem is that as far as we know, and despite the progresses of~\cite{Bernardi}, no bijective proof of~\eqref{eq:spanningH} (nor even~\eqref{eq:hypercube}) is known.


\begin{thebibliography}{10}
\bibitem{AT}
V. Anantharam, P. Tsoucas, 
A proof of the Markov chain tree theorem.
Statist. Probab. Lett. 8 (1989), no. 2, 189--192. 


\bibitem{An}
C.A. Athanasiadis,
Spectra of some interesting combinatorial matrices related to oriented spanning trees on a directed graph.
J. Algebraic Combin. 5 (1996), no. 1, 5–11. 


\bibitem{Bernardi}
O. Bernardi,
On the spanning trees of the hypercube and other products of graphs.
Electron. J. Combin. 19 (2012), no. 4, Paper 51.

\bibitem{B} P. Biane, Polynomials associated with finite Markov chains.
Séminaire de Probabilités XLVII, 249-262, Lecture Notes in Mathematics, 2137, Springer, Berlin, 2015.
\bibitem{L}
L. Levine, Sandpile groups and spanning trees of directed line graphs. J. Combin. Theory Ser. A 118 (2011), no. 2, 350–364. 


\bibitem{LP}
\newblock{R. Lyons, {\rm with} Y. Peres} (2014).
\newblock {\em Probability on Trees and Networks}.
\newblock Cambridge University Press.
\newblock In preparation. Current
  version available at 
  {\tt http://pages.iu.edu/\string~rdlyons/}.

\bibitem{MR}
\newblock{J. L. Martin and V. Reiner,}
\newblock{Factorization of some weighted spanning tree enumerators.}
\newblock{J. Combin. Theory Ser. A 104 (2003), no. 2, 287--300.} 


\bibitem{Seneta}
E. Seneta.
Non-negative matrices and Markov chains.
Revised reprint of the second (1981) edition. Springer Series in Statistics. Springer, New York, 2006. 
\bibitem{Stanley}
R. P. Stanley,
Enumerative combinatorics. Vol. 2. (English summary)
With a foreword by Gian-Carlo Rota and appendix 1 by Sergey Fomin. Cambridge Studies in Advanced Mathematics, 62. Cambridge University Press, Cambridge, 1999. xii+581 pp.

\end{thebibliography}
\end{document}